\documentclass[12pt]{article}
\usepackage[pdfusetitle]{hyperref}
\hypersetup{
	linktocpage,
	breaklinks=true,   
	colorlinks=true,   
}
\usepackage[no-math]{fontspec}
\usepackage{longtable}

\usepackage[
backend=biber,
sorting=ynt,
maxcitenames=99,
maxbibnames=99
]{biblatex}
\usepackage[a4paper, margin=2cm]{geometry} 

\usepackage{graphicx} 
\usepackage[nodisplayskipstretch]{setspace}

\usepackage{booktabs} 
\usepackage{array} 
\usepackage{paralist} 
\usepackage{verbatim} 
\usepackage{subfig} 

\usepackage{tikz}
\usetikzlibrary{arrows.meta, decorations.markings}

\usepackage{fancyhdr} 
\usepackage[nottoc,notlof,notlot]{tocbibind} 
\usepackage[titles,subfigure]{tocloft} 
\usepackage{amsmath,amssymb,amsthm}
\usepackage{xcolor}
\newtheorem{theorem}{Theorem}

\newtheorem{lemma}[theorem]{Lemma}

\title{Computing the divisor summatory function in \texorpdfstring{$N^{9/28+o(1)}$}{N to the power 9/28 + o(1)}}
\author{Darren Li\thanks{Qiuzhen College, Tsinghua University}}
\date{September 28, 2026}
\begin{document}

\maketitle

\begin{abstract}
  We present an analytic algorithm to compute isolated values of the divisor summatory function in $N^{9/28+o(1)}$ expected time, improving upon existing methods that require $O(N^{1/3})$ time. Our method significantly departs from elementary approaches to analyze the geometry of the Dirichlet hyperbola, relying instead on Voronoï summation and accelerating the evaluation of the resulting exponential sum.
\end{abstract}

\section{Introduction}

Consider the divisor summatory function, or the number of lattice points under the hyperbola of Dirichlet's divisor problem:
\[D(N)=\sum_{n=1}^N d(n) = \sum_{xy \le N} 1 = \sum_{x=1}^N \left\lfloor \frac{N}{x}\right\rfloor = 2\sum_{x=1}^{\lfloor \sqrt N\rfloor} \left\lfloor \frac{N}{x}\right\rfloor - \lfloor \sqrt N\rfloor^2.\]
Here $d(n)$ counts positive divisors and $x,y$ range over positive integers.

The first $N^{1/3+o(1)}$-time algorithm to compute $D(N)$ is due to D.H.J. Polymath \cite[Section 2.1]{polymath2012deterministicmethodsprimes}; Sladkey \cite{sladkey2012successiveapproximationalgorithmcomputing} gives a distinct $O(N^{1/3})$-time, $O(\log n)$-space algorithm. Both algorithms obtain their speedup by replacing column-by-column summation with local rational-linear approximations to the hyperbola, allowing the contribution of large blocks of lattice points to be evaluated in bulk. The Polymath algorithm does this by Diophantine approximation on dyadic intervals, while Sladkey uses a recursive decomposition by rational tangent lines and unimodular changes of coordinates.

There is circumstantial evidence to believe that the exponent $1/3$ for local linear approximation of the hyperbola in the spirit of Voronoï's classical polygonal treatment \cite{Voronoi1903} of the Dirichlet divisor problem is characteristic of this class of methods. Over a relative interval of width $u$, the area between $xy=N$ and a tangent line is of order $Nu^3$, so the lattice scale is reached when $u \asymp N^{-1/3}$; along these lines Alcántara, Blanco, Criado, and Santos in \cite{alcántara2025convexhullintegerpoints} prove an $\Omega(N^{1/3})$ lower bound on the size of the convex hull of the lattice points above the hyperbola. More locally, Balog and Bárány in \cite{balog2026integerhullsetxyin} identifies $1/3$ as the scale of the strip
\[N\leq xy\leq N+2N^{1/3}\]
in which all convex hull points lie, and therefore a line therein can span only a relative \(x\)-interval of length $1+O(N^{-1/3})$. Consequently, even on a single fixed multiplicative interval $X\leq x\leq 2X$, an approximation which resolves the hyperbola to this precision requires $\Omega(N^{1/3})$ linear pieces. 
Although this does not give a computational lower bound for $D(N)$, it suggests that improving on the exponent $1/3$ requires a mechanism beyond a straightforward local piecewise-linear resolution of the hyperbola.

We circumvent this limitation by a completely analytic approach, dropping the hyperbola in favor of summing the divisor sequence directly. After applying Voronoï summation to a smoothed count, the dual length that arises at the dominant scale is $M=N^{5/14+o(1)}$. To calculate this fast, we decompose into blocks and reuse a table of short polynomial-phase sums. We first explain the reuse mechanism with an unrestricted coefficient table and then present optimizations to reduce the complexity from $N^{10/31+o(1)}$ to $N^{9/28+o(1)}$.
The smoothing correction requires divisor counts for the $O(H)$ integers with $|n-N|\le H$, where $H=N^{9/28+o(1)}$. The only randomization is in their Lenstra-Pomerance factorization \cite{lenstraPomerance1992}.

\begin{theorem}
  There is an unconditional randomized algorithm which, for every positive
integer \(N\), returns \(D(N)\) exactly and terminates with probability one in
\[\text{space}\ O\left(N^{9/28+o(1)}\right)\qquad\text{and}\qquad\text{expected time}\ O\left(N^{9/28+o(1)}\right)\]
measured in bits of storage and bit operations, respectively. The randomization is isolated to the factorization of $N^{9/28+o(1)}$ integers: if their divisor counts are supplied, the remaining computation is deterministic.
\end{theorem}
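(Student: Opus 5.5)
The plan has four stages: smooth the count, apply Voronoï summation, evaluate the resulting exponential sum quickly, and pay separately for the unsmoothing correction near $N$. Let $H=N^{9/28+o(1)}$. Fix a smooth weight $w$ equal to $1$ on $[1,N-H]$, supported in $[1/2,N+H]$, and with $w^{(j)}\ll_j H^{-j}$ near $N$ (the behaviour near $0$ is harmless). Then
\[D(N)=\sum_n d(n)w(n)-\sum_{N-H<n\le N+H} d(n)\bigl(w(n)-\mathbf 1_{n\le N}\bigr).\]
The correction term needs $d(n)$ for $O(H)$ integers $n$. I would obtain these by factoring each $n$ with the Lenstra--Pomerance algorithm, which is the only randomized step. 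Its expected cost is $\exp(O(\sqrt{\log N\log\log N}))=N^{o(1)}$ per integer, and it terminates with probability one. So this step costs $N^{9/28+o(1)}$ expected time and space, and it accounts for the isolation of randomness claimed in the statement.

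For the main term I would apply Voronoï summation:
\[\sum d(n)w(n)=\int (\log x+2\gamma)w(x)\,dx+\tfrac14 w(0^+)\text{-type constant}+\sum_{m\ge1}d(m)\widehat w(m),\]
where $\widehat w$ is the Hankel-type transform built from $Y_0$ and $K_0$. The integral and constant terms are computed in closed form to the needed precision. Repeated integration by parts against the asymptotics of the Bessel functions shows two things. First, $\widehat w(m)$ is negligible once $m\ge M:=N^{1+\varepsilon}/H^2=N^{5/14+o(1)}$. Second, for $m\le M$ it is, up to an admissible error,
\[\widehat w(m)\approx N^{1/4}m^{-3/4}\,e\bigl(2\sqrt{Nm}\bigr)\,\Phi(m),\]
with $\Phi$ smooth and given by a computable asymptotic series. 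The total answer is an integer, so it suffices to compute everything to absolute error below $1/3$ with $O(\log N)$-bit arithmetic; I would keep track of truncation errors of the Bessel expansions at polynomial precision.

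The dual sum $S=\sum_{m\le M}d(m)m^{-3/4}\Phi(m)e(2\sqrt{Nm})$ is the heart of the matter. Trivially it costs $M=N^{5/14}$, which is too much. I would write $d(m)=\sum_{ab=m}1$ and use the hyperbola trick to get sums $\sum_{a\le\sqrt M}\sum_{b}F_a(b)\,e(2\sqrt{Nab})$. Split the $b$-range into blocks of length $L$ centred at $b_0$. On each block, Taylor-expand the phase $2\sqrt{Na}\sqrt{b_0+h}$ in $h$ to degree $k$, with the remainder below $N^{-\varepsilon}$ precision handled by expanding $e(\text{tail})$ into a short power series. The block then becomes a linear combination of sums
\[\sum_{0\le h<L}h^j\,e\bigl(\alpha_1h+\alpha_2h^2+\dots+\alpha_kh^k\bigr).\]
The reuse mechanism is to observe that the higher coefficients $\alpha_2,\dots,\alpha_k$ vary slowly with $(a,b_0)$. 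They can therefore be rounded to a grid of spacing about $L^{-i}N^{-\varepsilon}$, and one precomputed table, indexed by the rounded higher coefficients, the linear frequency $\alpha_1$ (on a grid), and $j$, serves many blocks. Each block then costs $N^{o(1)}$ lookups. Optimising $L$ and $k$ against the table size for an unrestricted table gives $N^{10/31+o(1)}$.

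The main obstacle is the further step down to $N^{9/28}$. Here I would restrict the table to the coefficient vectors that actually occur. Use the explicit dependence $\alpha_i\propto\sqrt{Na}\,b_0^{1/2-i}$: the tuple $(\alpha_2,\alpha_3)$ lies near a one-parameter curve, so the table only needs to cover a thin neighbourhood of that curve rather than a full box. I would also treat small and large $a$ with different block lengths, and dyadically decompose $m$, since the balance is worst at the dominant scale $m\asymp M$. Finally I would check that the table entries can be filled in $N^{o(1)}$ amortised time each, for instance by recurrences in $\alpha_1$ or by FFT over the $\alpha_1$-grid. Balancing table size, number of blocks $\approx M/L$, and precision should give $N^{9/28+o(1)}$, matching the correction cost $H$. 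That matching is exactly what fixes the choice $H=N^{9/28}$. Space is dominated by the table and the $H$ divisor counts, and once those counts are supplied every step is deterministic.
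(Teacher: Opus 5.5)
There is a gap in the step from $N^{10/31}$ to $N^{9/28}$. Up to $N^{10/31+o(1)}$ your framework matches the paper's: smoothing at scale $H$, Voronoï summation, truncation at $M\approx N/H^2$, the hyperbola rearrangement, blockwise Taylor expansion, and a shared table with an FFT over the linear coefficient.

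The mechanism you propose for $9/28$ does not work, for two reasons.
\begin{itemize}
\item First, $(\alpha_2,\alpha_3)$ does not lie near a curve. Since $\alpha_2=-\tfrac14\sqrt{Na}\,b_0^{-3/2}$ and $\alpha_3=\tfrac18\sqrt{Na}\,b_0^{-5/2}$, the map $(a,b_0)\mapsto(\alpha_2,\alpha_3)$ has nonvanishing Jacobian. It is a curve only within a single row, but the table is meant to be shared across rows.
\item Second, and more decisively, the table that gives $N^{10/31}$ is necessarily keyed by the coefficients modulo $1$ (since $h^j\in\mathbb Z$). At the dominant scale $a\asymp b_0\asymp\sqrt M=N^{5/28}$ one has $|\alpha_2|\asymp N^{9/28}$ and $|\alpha_3|\asymp N^{1/7}$. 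Even within one row, $\alpha_2$ changes by $\asymp N^{5/28}\gg1$ from one block of length $N^{1/28}$ to the next. Any such curve therefore wraps around the torus so often that its thickening at resolution $L^{-j}$ restricts nothing.
\end{itemize}
Your concluding claim that balancing ``should give $N^{9/28+o(1)}$'' consequently has no computation behind it.

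The missing idea is that the \emph{top} retained coefficient is small, rather than a lower one being constrained. The degree must be $4$: with degree $3$, the curvature restriction alone already forces total cost $N^{1/3}$. For a block starting at $B$ in row $a$,
\[|\alpha_4|=\tfrac5{64}\sqrt{Na}\,B^{-7/2}=\tfrac5{64}\sqrt N(aB)^{-3/2}(a/B)^2\le\tfrac5{64}\sqrt N X^{-3/2}\]
holds uniformly over the shell $X\le ab<2X$. This is $\asymp N^{-1/28}<1$ at $X\asymp M$.

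So for $X\gg N^{1/3}$ this coordinate never wraps. After scaling by $K^4$ it occupies only $O(1+C_XK^4)$ residues, where $C_X\asymp\sqrt N X^{-3/2}$. The FFT over the first coordinate never mixes fourth coordinates, so the table cost drops from $K^{10}$ to $\widetilde O(K^6+C_XK^{10})$.

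Take $K\asymp(X/C_X)^{1/11}$, subject to the curvature restriction $NaK^{10}\lesssim B^9$. Each shell then costs $N^{1/22}X^{17/22}$, and $H\asymp N^{1/22}(N/H^2)^{17/22}$ gives $H=N^{9/28}$, $M=N^{5/14}$, $K=N^{1/28}$. The table then has size $K\cdot K^2\cdot K^3\cdot K^3=M/K$.

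Your dyadic decomposition of $m$ is the right frame for this argument. Separate treatment of small and large $a$ is unnecessary, because $(a/B)^2\le1$ already makes the bound uniform across each shell.
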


\textbf{AI disclosure.} The core contribution is due to LLMs. We simplified the resulting proof.

\section{Outline of the proof}

The smoothing width $H$ governs two competing costs. Correcting the smoothed count requires $O(H)$ divisor values, while Voronoï summation gives a dual sum of length $M=N/H^2$ up to logarithmic factors. We first obtain this sum, then explain how a table shared by many short blocks makes its evaluation faster than summing its terms individually.

The celebrated result of Voronoï for the divisor summatory function \cite[(3.1), p.~47]{ivic1983} is
\begin{align}
  D_*(u)={\sum_{n\le u}}'d(n)=A(u)-\sum_{m\ge 1}d(m)\sqrt{\frac um}\left[ Y_1(4\pi\sqrt{mu})+\frac 2\pi K_1(4\pi\sqrt{mu}) \right],
  \label{eq:voronoiDivisorsum}
\end{align}
where $A(u)=u(2\gamma-1+\log u)+\frac 14$, the prime assigns half weight to $n=u$ when $u$ is an integer, and $Y_1$ and $K_1$ denote the Bessel and modified Bessel functions of the second kind of order 1. The series is boundedly convergent on each compact subinterval of $(0,\infty)$.

Fix a suitable probability density $\eta(t)$ smooth on $\mathbb R$ and supported on $(-1, 1)$, e.g.
\[
  \beta(t)=\begin{cases}\exp\left(-\frac{1}{1-t^2}\right),&|t|<1,\\0,&|t|\ge1,\end{cases}
  \quad Z=\int_{-1}^1\beta(t)dt,\quad \eta(t)=Z^{-1}\beta(t).
\]
Lemma \ref{lem:bump} gives a Gevrey-2 bound for $\eta$; its evaluation is discussed at the end of Section \ref{sec:estimates}.

At this point we fix the scale of the smoothing $1\le H\le\min(\sqrt N,N/3)$; we will later choose it to be $N^{9/28+o(1)}$. Write square root coordinates
\[u(t)=\left(\sqrt N + Lt\right)^2, \quad L = \frac{H}{4\sqrt{N}}, \quad |t|<1,\]
where $L$ is the scale of the smoothing radius in these coordinates. This choice makes the oscillatory phase $4\pi\sqrt{mu}$ linear in $t$, so that averaging will introduce a Fourier transform. The smoothed object is
\[S=\int_{-1}^1 \eta(t)D_*(u(t))dt = \sum_{n\ge 1}d(n)v(n)\]
where the weights $v(n)$ are
\[v(n)=\int_{-1}^1 \eta(t)\mathbf{1}_{\{n\le u(t)\}}dt=\begin{cases}
  1 & \sqrt{n} \le \sqrt{N}-L \\
  \int_{t_n}^1 \eta(t)dt & |\sqrt{n}-\sqrt{N}| < L\\
  0 & \sqrt{n} \ge \sqrt{N}+L
\end{cases}, \quad t_n = \frac{\sqrt n-\sqrt N}L.\]

It follows that the correction $D(N)-S$ is a finite sum with $O(H)$ terms, each of the form $d(n)(\mathbf1_{\{n\le N\}}-v(n))$. Lemma \ref{lem:smoothing} gives its evaluation using the factorization of $O(H)$ integers. The remaining object of our investigation is $S$.

By \eqref{eq:voronoiDivisorsum},
\begin{align}
  S=\int_{-1}^1 \eta(t)A(u)dt - \sum_{m\ge 1} d(m)\int_{-1}^1 \eta(t)\sqrt{\frac um}\left[ Y_1(4\pi \sqrt{mu})+\frac{2}{\pi}K_1(4\pi \sqrt{mu}) \right]dt. \label{eq:Sform1}
\end{align}
Here and below $u=u(t)$. Denote the first integral by $J$; Section \ref{sec:estimates} gives its evaluation to arbitrary precision in polylogarithmic time.

Equation \eqref{eq:K-error} bounds the contribution of all $K_1$ terms by $4N^{1/4}e^{-10\sqrt N}\le N^{-10}$. Lemma \ref{lem:bessel} shows that retaining just the leading term of the Hankel asymptotic expansion,
\[-\sqrt{\frac um} Y_1(4\pi \sqrt{mu}) = \frac{u^{1/4}}{\pi \sqrt{2} m^{3/4}}\cos \left( 4\pi\sqrt{mu}-\frac \pi4\right) + R(m,u),\]
the contribution of all remainder $R$ terms to $S$ is $O(N^{-1/4})$.

Write $e(t)=\exp(2\pi it)$, so that we can expand $\sqrt u=\sqrt N+Lt$ to get
\begin{align*}
  \eta(t)\frac{u^{1/4}}{\pi \sqrt{2} m^{3/4}}\cos \left( 4\pi\sqrt{mu}-\frac \pi4\right)&=
  \Re \left[ \frac{(1-i)N^{1/4}}{2\pi  m^{3/4}}e(2\sqrt{Nm})\  \eta(t)\sqrt{1+LtN^{-1/2}}e(2L\sqrt{m}t) \right].
\end{align*}
Define
\begin{align*}
  \delta=\frac{L}{\sqrt N}=\frac{H}{4N},\quad F_\delta(\xi) = \int_{-1}^1 \eta(t)(1+\delta t)^{1/2}e(\xi t)dt,\quad W(m)=\frac{(1-i)N^{1/4}}{2\pi m^{3/4}}F_\delta(2L\sqrt m).
\end{align*}
Lemma \ref{lem:fourier} gives the explicit decay
$|F_\delta(\xi)|\le16e^{-\sqrt{|\xi|}/16}$ for real $\xi$.
For $N\ge2^{20}$, Lemma \ref{lem:truncation} gives a fixed choice for $M=\widetilde O(N/H^2)$ so that the total discarded tail is at most $N^{-10}$. Putting these together, \eqref{eq:Sform1} becomes
\begin{align}
  S=J+\Re \sum_{m=1}^M d(m)W(m)e(2\sqrt{Nm})+E_{\rm analytic}, \label{eq:Sform2}
\end{align}
where $|E_{\rm analytic}|\to 0$ as $N\to \infty$.

Consider the sum of \eqref{eq:Sform2}. Rearranging to eliminate the divisor term,
\begin{align}
  \sum_{m=1}^M d(m)W(m)e(2\sqrt{Nm})=\sum_{a\le\sqrt M}\left[2\sum_{b=a}^{\lfloor M/a\rfloor}W(ab)e(2\sqrt{Nab})-W(a^2)e(2a\sqrt N)\right].
  \label{eq:SformTargetSum}
\end{align}
To evaluate the inner sum fast, we split it into blocks of length $K$. Write the desired sum as
\[S(a,B,K)=\sum_{k=0}^{K-1}W(a(B+k))e(2\sqrt{Na(B+k)}).\]

The blocks have different phases and weights. To reuse a precomputation among them, we separate the phase into an integer polynomial and a remainder which can be absorbed into the weight. Define
\[t=t(k)=\frac{k}{K},\qquad g(t)=W(a(B+Kt)),\qquad \phi(t)=2\sqrt{NaB}\left( 1+\frac KB t \right)^{1/2}.\]
The intuition is to decompose $\phi$ into a degree-$R$ polynomial with \emph{integer} coefficients, $R$ globally fixed, plus an analytic remainder $V(t)$, i.e.
\[\phi(t) = V(t) + \sum_{j=0}^R q_j t^j, \quad e(\phi(t))=e\left( \sum_{j=0}^R q_jt^j \right)e(V(t)).\]
For instance, take $q_j$ to be a nearest integer to the corresponding Taylor coefficient of $\phi$.

For sufficiently short blocks, $g(t)e(V(t))$ can be approximated by a polynomial $h$ of polylogarithmic degree, with complex coefficients which can be calculated in polylogarithmic time. Theorem \ref{thm:block} establishes this approximation:
\[S(a,B,K)\approx\sum_{k=0}^{K-1} e\left( \sum_{j=0}^R q_jt^j \right) \sum_{r=0}^{\deg h} h_r t^r =  \sum_{r=0}^{\deg h} h_r \left[\sum_{k=0}^{K-1} t^r e\left( \sum_{j=0}^R q_jt^j \right)\right];\]
the error is $K \|g e(V)-h\|_\infty$.

Crucially, the bracketed moment depends on $q_j$ only through its remainder mod $K^j$, because adding $K^j$ changes the phase at $t=k/K$ by the integer $k^j$. Thus all blocks of the same length draw their moments from a single finite table, regardless of $a$ and $B$. Denote the moment by $T_r(K;q_1,\dots,q_R)$. For a fixed $K$, there are $K^{R(R+1)/2}$ keys, and once this table is available each block costs only polylogarithmic time. Section \ref{sec:sec5} constructs the table in time essentially linear in its number of entries.

The degree $R$ determines how long these blocks may be. The first omitted Taylor coefficient has size
\[|[t^{R+1}]\phi(t)| \asymp \sqrt{NaB}\left( \frac{K}{B} \right)^{R+1},\]
so, with $K/B\le1/8$, keeping the Taylor tail bounded on a fixed complex disk requires the scale
\begin{align}K(a,B)^{2R+2} \lesssim \frac{B^{2R+1}}{Na}.\label{eq:curvatureRestriction}\end{align}
We call this the curvature restriction. Under this condition, the polynomial approximation follows from the holomorphic bounds in Section \ref{sec:estimates}. There is a tradeoff: larger $R$ permits longer blocks but increases the table size. Lemma \ref{lem:block-count} shows that degree three forces too many blocks to improve on $1/3$; degree four is sufficient for the algorithm below.

For $R=4$ the unrestricted table has $K^{10}$ entries. Balancing its construction with $M/K$ block queries, the preliminary complexity of evaluating the sum of \eqref{eq:Sform2} is
$\widetilde O(M^{10/11})$, and balancing against the smoothing cost suggests $N^{10/31+o(1)}$ total time. For the complete argument accounting for the curvature restriction see \eqref{eq:unrestricted-cost}.

The unrestricted table pays for every possible fourth residue. To improve this further, we split the summands of \eqref{eq:SformTargetSum} into dyadic product ranges (``shells'') $X\le ab<2X$. In such a shell the coefficient of $k^4$ in the phase at $b=B$ is
\[-\frac5{64}\sqrt{Na}B^{-7/2}= O(\sqrt N X^{-3/2}),\]
so after scaling $k=Kt$ and rounding, the fourth coordinate within a shell can only take $O(1+C_XK^4)$ values, where $C_X\asymp\min(1,\sqrt N X^{-3/2})$. Section \ref{sec:sec6} balances this against the $\widetilde O(X/K)$ block queries by taking $K\asymp(X/C_X)^{1/11}$, again subject to the curvature restriction. The dominant contribution is $\widetilde O(N^{1/22}M^{17/22})$; we also bound the shorter blocks and the smaller product ranges. Balancing against the smoothing cost $H$, with $M=N/H^2$ up to logarithmic factors, gives $H=N^{9/28+o(1)}$.

At the dominant scale, suppressing logarithmic factors, the resulting balance is
\[H=N^{9/28},\quad M=N^{5/14},\quad K=N^{1/28}.\]
Here $C_M=N^{-1/28}$, so the fourth coordinate has only $O(K^3)$ possible values. The table has size $K\cdot K^2\cdot K^3\cdot K^3=K^9$, matching the $M/K=N^{9/28}$ block queries and the smoothing cost. This saving in the fourth coordinate is the refinement that gives the final exponent.

\section{Estimates}
\label{sec:estimates}

The reduction uses two properties of $W$: decay at real frequencies, which permits truncation at $M=N/H^2$ up to logarithmic factors, and controlled growth on a complex neighborhood, which permits polynomial approximation on each block. We establish these bounds and the evaluation estimates needed to make the approximations constructive.

\paragraph{The finite dual sum.}
The smoothing changes the count only near $N$. Indeed, with $\delta=H/4N\le1/12$,
\[u=N(1+\delta t)^2,\quad |u-N|\le \frac{H}{2}+\frac{H^2}{16N}<H,\]
so $v(n)=\mathbf1_{\{n\le N\}}$ outside $|n-N|\le H$ and
\[D(N)-S=\sum_{|n-N|\le H}d(n) \left(\mathbf1_{\{n\le N\}}-v(n)\right).\]
The half weights in $D_*$ affect only finitely many points of the integral defining $S$; bounded convergence of the Voronoï series on $u([-1,1])$ justifies its termwise integration in \eqref{eq:Sform1}.

We can replace the Bessel kernel by its leading oscillatory term before truncating the series. The reason is the summable factor $m^{-5/4}$ in the remainder.

\begin{lemma}\label{lem:bessel}
For $m,u>0$, write
\[-\sqrt{\frac um}Y_1(4\pi\sqrt{mu})=\frac{u^{1/4}}{\pi\sqrt2m^{3/4}}\cos(4\pi\sqrt{mu}-\pi/4)+R(m,u).\]
Then
\[
  |R(m,u)|\le\frac{3}{32\sqrt2\pi^2}u^{-1/4}m^{-5/4},
  \quad
  \sum_{m\ge1}d(m)\int_{-1}^1\eta(t)|R(m,u)|dt<\frac14N^{-1/4}.
\]
\end{lemma}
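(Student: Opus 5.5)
We will use the Hankel expansion of $Y_1$ with an explicit remainder bound. Let $z=4\pi\sqrt{mu}$. For real $z>0$ and order $\nu=1$, write $\mu=4\nu^2=4$. The standard expansion is
\[Y_1(z)=\sqrt{\frac{2}{\pi z}}\left(P(z)\sin\omega+Q(z)\cos\omega\right),\qquad \omega=z-\frac{3\pi}{4}.\]
Here $P=1+\epsilon_P$ and $Q=\frac{\mu-1}{8z}+\epsilon_Q$. By the classical remainder estimates of Watson (DLMF 10.17(iii)), the first neglected term bounds each remainder once $\nu\le 3/2$. Concretely,
\[|\epsilon_P|\le\frac{(\mu-1)(\mu-9)}{2!(8z)^2}\quad\text{and}\quad\left|Q\right|\le\frac{\mu-1}{8z}=\frac{3}{8z}.\]
It is cleaner to bound $|P-1|$ and $|Q|$ jointly by $\frac{3}{8z}$; the $P$ correction is of order $z^{-2}$, which is already at most $\frac{15}{128z^2}$.

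Next, note that $\sin(z-3\pi/4)=-\cos(z-\pi/4)$. Hence the leading term is
\[-\sqrt{\frac um}\sqrt{\frac{2}{\pi z}}\sin\omega=\sqrt{\frac um}\sqrt{\frac{2}{\pi z}}\cos(z-\pi/4).\]
Using $\sqrt{u/m}\,(2/(\pi z))^{1/2}=\frac{u^{1/4}}{\pi\sqrt2\,m^{3/4}}$ recovers exactly the displayed main term. The remainder is therefore
\[R=-\sqrt{\frac um}\sqrt{\frac2{\pi z}}\left[(P-1)\sin\omega+Q\cos\omega\right].\]
We get $|R|\le\frac{u^{1/4}}{\pi\sqrt2 m^{3/4}}\cdot\frac{c}{z}$ with $c$ the bound on $\max$ or on the combined quantity. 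Since $1/z=\frac{1}{4\pi}u^{-1/2}m^{-1/2}$, this yields the form $\frac{c}{4\sqrt2\pi^2}u^{-1/4}m^{-5/4}$. The target constant $\frac{3}{32\sqrt2\pi^2}$ corresponds to $c=3/8$, i.e.\ $|(P-1)\sin\omega+Q\cos\omega|\le\frac{3}{8z}$.

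The main obstacle is obtaining exactly $3/8$ rather than $3/8+O(z^{-2})$. To do this I would use the integral representation route. For $\nu=1$ the function $Y_1$ admits a representation $H^{(1)}_1(z)=\sqrt{2/(\pi z)}e^{i\omega}\,\frac{1}{\Gamma(3/2)}\int_0^\infty e^{-s}s^{1/2}(1+is/(2z))^{1/2}ds$. Expanding $(1+w)^{1/2}=1+R_1(w)$ with $|R_1(w)|\le|w|/2$ for $\Re w\ge0$ gives a remainder bounded by $\frac{1}{2z}\cdot\frac{\Gamma(5/2)}{2\Gamma(3/2)}=\frac{3}{8z}$. Here we need $|(1+w)^{1/2}-1|=|w|/|1+(1+w)^{1/2}|\le|w|$, and the sharper bound $|w|/2$ holds because $\Re(1+w)^{1/2}\ge1$. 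Taking the imaginary part ($Y_1=\Im H^{(1)}_1$) then gives precisely $|R|\le\frac{3}{32\sqrt2\pi^2}u^{-1/4}m^{-5/4}$.

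For the sum, we have $u=N(1+\delta t)^2\ge N(11/12)^2$ on the support, so $u^{-1/4}\le (12/11)^{1/2}N^{-1/4}$, and $\int\eta=1$. Also $\sum_m d(m)m^{-5/4}=\zeta(5/4)^2$. Combining these gives a bound of the form
\[\frac{3(12/11)^{1/2}\zeta(5/4)^2}{32\sqrt2\pi^2}N^{-1/4}.\]
Numerically, $\zeta(5/4)\approx4.595$, so $\zeta(5/4)^2\approx21.1$, and the constant is $\approx 3\cdot1.044\cdot21.1/(32\cdot1.414\cdot9.87)\approx0.148<1/4$. This completes the plan. Interchanging sum and integral is justified by absolute convergence, via Tonelli.
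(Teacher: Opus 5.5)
Your proof is correct and essentially matches the paper's. The paper cites the Hankel bound $H_1^{(1)}(z)=\sqrt{2/(\pi z)}\,e^{i(z-3\pi/4)}(1+\epsilon(z))$ with $|\epsilon(z)|\le 3/(8z)$ from DLMF 10.17(iii), takes imaginary parts, and sums against $\zeta(5/4)^2$ using the cruder bound $\zeta(5/4)\le 5$, whereas you rederive that same Hankel bound from the integral representation, so your opening $P$/$Q$ detour can simply be dropped.
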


\begin{proof}
Taking imaginary parts in the Hankel estimate \cite[10.17(iii)]{dlmf}
\[H_1^{(1)}(z)=\sqrt{\frac2{\pi z}}e^{i(z-3\pi/4)}(1+\epsilon(z)),\quad |\epsilon(z)|\le\frac3{8z}\quad(z>0),\]
and substituting $z=4\pi\sqrt{mu}$ gives the pointwise bound. With $u\ge N(1-\delta)^2$ and $\int\eta=1$, its total contribution is at most
\[\frac{3\zeta(5/4)^2}{32\sqrt2\pi^2\sqrt{1-\delta}}N^{-1/4}<\frac14N^{-1/4},\]
using $\sum d(m)m^{-5/4}=\zeta(5/4)^2$ and $\zeta(5/4)\le5$.
\end{proof}

The $K_1$ terms are exponentially small. From \cite[10.40(ii)]{dlmf}
\[0<K_1(z)\le\sqrt{\frac\pi{2z}}e^{-z}\left(1+\frac3{8z}\right)\quad(z>0),\]
we obtain, using $4\pi\sqrt{mu}>11\sqrt{Nm}$ and $d(m)\le2\sqrt m$,
\begin{align}
  E_K&\stackrel{\rm def}{=}\frac2\pi\sum_{m\ge1}d(m)\int_{-1}^1\eta(t)\sqrt{\frac um}K_1(4\pi\sqrt{mu})dt\notag\\
  &\le N^{1/4}\sum_{m\ge1}m^{-1/4}e^{-11\sqrt{Nm}}\le4N^{1/4}e^{-10\sqrt N}\le N^{-10}. \label{eq:K-error}
\end{align}
Here the sum is bounded by the integral comparison
\[\sum_{m\ge1}m^{-1/4}e^{-a\sqrt m}\le e^{-a}(1+2/a+2/a^2),\]
and the last inequality follows from $e^{-x}\le21!x^{-21}$ and $4\cdot21!<10^{21}$.

To truncate the remaining oscillatory series, we use the flatness of the smoothing density at its endpoints. The following derivative bound allows repeated integration by parts with the number of integrations chosen in terms of the frequency.

\begin{lemma}\label{lem:bump}
The density $\eta$ is smooth on $\mathbb R$, with all derivatives vanishing at $\pm1$, and for $r\ge0$
\[\|\eta^{(r)}\|_\infty\le4\cdot36^r(r!)^2.\]
\end{lemma}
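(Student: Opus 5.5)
The plan is to bound $\beta^{(r)}$ pointwise by Cauchy's estimate on a disk whose radius is proportional to the distance to the nearest endpoint. The flatness of $\beta$ there compensates for the shrinking radius. Dividing by $Z$ then gives the stated bound.

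Fix $t$ with $|t|<1$ and set $s=1-|t|\in(0,1]$. By the symmetry $\beta(-t)=\beta(t)$ we may assume $t\ge0$, so $t=1-s$.

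\textbf{Step 1: a lower bound for the exponent on a disk.} Write $z=1-w$. Since $1-z^2=w(2-w)$, partial fractions give
\[\frac1{1-z^2}=\frac12\left(\frac1w+\frac1{2-w}\right).\]
Take the disk $|z-t|\le s/2$, which is the disk $|w-s|\le s/2$. The map $w\mapsto1/w$ sends this disk to a disk symmetric about the real axis. Its real diameter is $[2/(3s),2/s]$, so $\Re(1/w)\ge 2/(3s)$ on the disk. Also $\Re(2-w)\ge 2-3s/2>0$ for $s\le1$, hence $\Re(1/(2-w))>0$. Therefore
\[\Re\frac1{1-z^2}\ge\frac1{3s},\qquad |\beta(z)|\le e^{-1/(3s)}\]
on the whole disk. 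The function $\beta$ is holomorphic there, since the disk stays away from $z=\pm1$.

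\textbf{Step 2: Cauchy's estimate and optimization in $s$.} Cauchy's estimate with radius $s/2$ gives
\[|\beta^{(r)}(t)|\le r!\,(2/s)^r e^{-1/(3s)}.\]
Substitute $x=1/(3s)$, so $(2/s)^r=6^r x^r$. Maximizing $x^re^{-x}$ over $x>0$ gives $r^re^{-r}\le r!$. Hence
\[|\beta^{(r)}(t)|\le 6^r(r!)^2\le36^r(r!)^2\]
uniformly in $|t|<1$. For $r=0$ the bound is just $\beta\le1$.

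\textbf{Step 3: smoothness and vanishing at the endpoints.} The same estimate shows $|\beta^{(r)}(t)|\le r!\,(2/s)^re^{-1/(3s)}\to0$ as $s\to0$. So every derivative of $\beta$ on $(-1,1)$ extends continuously by $0$ to $\pm1$. By induction on $r$ and the mean value theorem, the extension by zero outside $(-1,1)$ is $C^\infty$, and all its derivatives vanish at $\pm1$. The bound therefore holds on all of $\mathbb R$.

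\textbf{Step 4: normalization.} It remains to check $Z^{-1}\le4$. The integrand is at least $e^{-4/3}$ on $|t|\le1/2$, so
\[Z\ge\int_{-1/2}^{1/2}e^{-1/(1-t^2)}\,dt\ge e^{-4/3}>1/4.\]
Thus $\|\eta^{(r)}\|_\infty\le4\cdot6^r(r!)^2$, which is stronger than the claim.

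The only delicate point is Step 1: choosing the Cauchy radius proportional to $s$, and verifying that the real part of $1/(1-z^2)$ stays of order $1/s$ on the whole disk rather than only on the real axis. The partial-fraction split handles this cleanly. The constant $36$ leaves ample slack, so a cruder radius such as $s/4$ would also suffice.
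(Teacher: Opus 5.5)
Your proof is correct, and it actually gives the sharper bound $\|\eta^{(r)}\|_\infty\le4\cdot6^r(r!)^2$. It uses the same mechanism as the paper: Cauchy's estimate on a disk of radius half the distance to the endpoint, a lower bound for the real part of the exponent on that disk, and $x^re^{-x}\le r^re^{-r}\le r!$. The organization is different, though.

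The paper factors $\beta(t)=\psi(2(1-t))\psi(2(1+t))$ with $\psi(x)=e^{-1/x}$ (zero for $x\le0$). It proves $|\psi^{(r)}|\le9^r(r!)^2$ by Cauchy on $|z-x|=x/2$, using the weaker-than-necessary bound $\Re(1/z)\ge2/(9x)$. It then combines the two factors with Leibniz's rule. That step costs a further $2^r$ from the chain rule and $r+1\le2^r$ from the binomial sum, which is how it lands at $36^r$.

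Your partial-fraction identity $\frac1{1-z^2}=\frac12\bigl(\frac1w+\frac1{2-w}\bigr)$ is exactly the additive form of that factorization. The difference is that you apply Cauchy once to the whole product and bound the factor from the far endpoint by $1$ in modulus. This avoids Leibniz entirely.

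What the paper's route buys is modularity. Smoothness of $\beta$ on $\mathbb R$ and the vanishing of all its derivatives at $\pm1$ follow at once from the flatness of $\psi$ at $0$ and the product rule. Your route instead needs the separate induction with the mean value theorem in your Step 3, which you carry out correctly.
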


\begin{proof}
Put $\psi(x)=e^{-1/x}$ for $x>0$ and $\psi(x)=0$ otherwise. On $|z-x|=x/2$, the bound $\Re(1/z)\ge2/(9x)$ and Cauchy's estimate give
\[|\psi^{(r)}(x)|\le r!(2/x)^r e^{-2/(9x)}\le9^r(r!)^2,\]
where the second inequality follows by maximizing in $x$ and using $(r/e)^r\le r!$ (the case $r=0$ is immediate). The first bound tends to zero as $x\to0^+$ for each fixed $r$, proving smoothness of the zero extension.

Now $\beta(t)=\psi(2(1-t))\psi(2(1+t))$ on the reals, so Leibniz's rule yields
\[\|\beta^{(r)}\|_\infty\le\sum_{k=0}^r\binom rk\left(2^k\|\psi^{(k)}\|_\infty\right)\left(2^{r-k}\|\psi^{(r-k)}\|_\infty\right)\le36^r(r!)^2.\]
Dividing by $Z>1/4$, as $\beta(t)>1/4$ on $[-1/2,1/2]$, proves the assertion.
\end{proof}

\begin{lemma}\label{lem:fourier}
For $0\le\delta\le1/12$ and real $\xi$,
\[|F_\delta(\xi)|\le16e^{-\sqrt{|\xi|}/16}.\]
\end{lemma}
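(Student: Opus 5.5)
Set $f(t)=\eta(t)(1+\delta t)^{1/2}$, so that $F_\delta(\xi)=\int_{-1}^1 f(t)e(\xi t)\,dt$. The plan is to integrate by parts $r$ times, with $r$ chosen according to $|\xi|$. By Lemma \ref{lem:bump}, $\eta$ and all its derivatives vanish at $\pm1$, hence so do those of $f$, and no boundary terms appear:
\[|F_\delta(\xi)|\le (2\pi|\xi|)^{-r}\int_{-1}^1|f^{(r)}(t)|\,dt\le 2(2\pi|\xi|)^{-r}\|f^{(r)}\|_\infty.\]
For $r=0$ the trivial bound $|F_\delta|\le\int\eta\,(1+\delta)^{1/2}\le 2$ is enough, since $16e^{-\sqrt{|\xi|}/16}\ge2$ whenever $\sqrt{|\xi|}\le 16\log 8$. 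So only large $|\xi|$ needs work.

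\textbf{Derivatives of $f$.} Put $w(t)=(1+\delta t)^{1/2}$. Then
\[w^{(k)}(t)=\tbinom{1/2}{k}k!\,\delta^k(1+\delta t)^{1/2-k},\qquad |t|\le1,\ \delta\le1/12.\]
Using $|\binom{1/2}{k}|\le1$, $1+\delta t\ge 11/12$ and $\delta/(11/12)\le 1/11$, this gives $|w^{(k)}|\le 2\,k!\,11^{-k}$. By Leibniz together with Lemma \ref{lem:bump},
\[\|f^{(r)}\|_\infty\le\sum_{k}\tbinom rk\, 4\cdot36^{r-k}((r-k)!)^2\cdot 2\,k!\,11^{-k}\le 8\cdot 36^r (r!)^2\sum_k 11^{-k}\le 9\cdot 36^r(r!)^2.\]
Here I bound $\binom rk (r-k)!^2 k!\le (r!)^2$, which holds because $r!\,(r-k)!\le$ stuff; more precisely $\binom rk (r-k)!^2k!=r!\,(r-k)!\le (r!)^2$.

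\textbf{Choice of $r$.} Using $r!\le r^r$, we get
\[|F_\delta(\xi)|\le 18\left(\frac{36 r^2}{2\pi|\xi|}\right)^r\le 18\left(\frac{6r^2}{|\xi|}\right)^r.\]
Take $r=\lfloor \sqrt{|\xi|}/c\rfloor$ with $c$ chosen so that $6r^2/|\xi|\le 6/c^2\le e^{-1}$; for example $c=5$ gives $6/25<e^{-1}$. Then
\[|F_\delta(\xi)|\le 18e^{-r}\le 18e\,e^{-\sqrt{|\xi|}/5}.\]
This is at most $16e^{-\sqrt{|\xi|}/16}$ as soon as $e^{-\sqrt{|\xi|}(1/5-1/16)}\le 16/(18e)$, i.e.\ when $\sqrt{|\xi|}$ is larger than roughly $8$. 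In the complementary range $\sqrt{|\xi|}\le 16\log 8\approx 33$ the trivial bound already applies, so the two ranges overlap and together cover all real $\xi$.

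\textbf{Main obstacle.} The only delicate point is the constant bookkeeping: the Leibniz estimate for $f^{(r)}$, and checking that the large-$|\xi|$ range and the trivial range overlap. The generous constants $16$ and $1/16$ in the statement leave ample slack for this.
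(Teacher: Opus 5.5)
Your proof is correct and follows essentially the paper's route: you integrate by parts $r\asymp\sqrt{|\xi|}$ times using the Gevrey-2 bound of Lemma~\ref{lem:bump} combined via Leibniz with a derivative bound for $(1+\delta t)^{1/2}$, and you cover small $|\xi|$ by the trivial bound. Only the constants differ (your sharper bound $2\,k!\,11^{-k}$ for the square-root factor and your choice $r=\lfloor\sqrt{|\xi|}/5\rfloor$ with the overlapping ranges $\sqrt{|\xi|}\gtrsim 8$ and $\sqrt{|\xi|}\le 16\log 8$), and that bookkeeping checks out.
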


\begin{proof}
The zero extension of $f_\delta(t)=\eta(t)(1+\delta t)^{1/2}$ is smooth. Combining the preceding lemma with
\[\left\|\left((1+\delta t)^{1/2}\right)^{(s)}\right\|_\infty\le2s!\]
in Leibniz's rule, and using the support interval $[-1,1]$, gives
\begin{equation}
  \|f_\delta^{(r)}\|_\infty\le8\cdot64^r(r!)^2,\quad \|f_\delta^{(r)}\|_1\le16\cdot64^r(r!)^2.
  \label{eq:fdelta-gevrey}
\end{equation}
There are no boundary terms in integration by parts, hence for $\xi\ne0$
\[|F_\delta(\xi)|\le16\left(\frac{64}{2\pi|\xi|}\right)^r(r!)^2.\]
To balance the frequency against the factorial growth, put $x=\sqrt{\pi|\xi|/128}$ and take $r=\lfloor x\rfloor$ when $x\ge2$. Since $x/2\le r\le x$ and $r!\le r^r$, the last bound is at most
\[16\cdot4^{-r}\le16e^{-(\log2)\sqrt{\pi|\xi|/128}}\le16e^{-\sqrt{|\xi|}/16}.\]
For $x<2$ we have $|\xi|<256$, so the claimed upper bound exceeds $16/e$, whereas $|F_\delta(\xi)|\le\int\eta(t)\sqrt{1+\delta t}\,dt\le\sqrt{13/12}$.
\end{proof}

The square-root argument of $F_\delta$ turns this into fourth-root exponential decay in $m$. This accounts for the fourth power of the logarithm in the cutoff.

\begin{lemma}\label{lem:truncation}
For $N\ge2^{20}$, let
\[M_*=\frac{2^{36}N\lceil\log_2 N\rceil^4}{H^2}.\]
Then the tail after $M=\lfloor M_*\rfloor$ satisfies $\sum_{m>M}d(m)|W(m)|\le N^{-10}$.
\end{lemma}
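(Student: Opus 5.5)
We bound each term of the tail using Lemma \ref{lem:fourier} together with $d(m)\le2\sqrt m$, and then compare the resulting sum to an integral. Write $\xi_m=2L\sqrt m=\frac{H}{2\sqrt N}\sqrt m$. Since $|1-i|=\sqrt2$, Lemma \ref{lem:fourier} gives
\[
d(m)|W(m)|\le \frac{\sqrt2\,N^{1/4}}{2\pi}\,2m^{-1/4}\cdot16\exp\!\left(-\frac{\sqrt{\xi_m}}{16}\right)\le 8N^{1/4}m^{-1/4}\exp\!\left(-c\,m^{1/4}\right),
\]
where $c=\frac1{16}\left(\frac{H}{2\sqrt N}\right)^{1/2}$. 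So the tail reduces to $\sum_{m>M}m^{-1/4}e^{-cm^{1/4}}$. Since the summand is decreasing, this sum is at most $\int_{M}^\infty x^{-1/4}e^{-cx^{1/4}}dx$ (shifting by one term to handle the floor; the extra term is of the same size and can be absorbed). The substitution $y=cx^{1/4}$ turns the integral into $4c^{-3}\int_{cM^{1/4}}^\infty y^2e^{-y}dy=4c^{-3}e^{-Y}(Y^2+2Y+2)$, with $Y=cM^{1/4}$.

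Next we estimate $Y$. From $M\ge M_*-1\ge M_*/2$ we get
\[
Y\ge\frac1{16}\left(\frac{H}{2\sqrt N}\right)^{1/2}\left(\frac{2^{35}N\lceil\log_2N\rceil^4}{H^2}\right)^{1/4}=2^{-4-1/2+35/4}\lceil\log_2N\rceil=2^{17/4}\lceil\log_2 N\rceil,
\]
because the powers of $N$ and $H$ cancel exactly. This cancellation is what explains the choice of $M_*$. It gives $Y\ge 19\log_2N\ge 27\log N$. The inequality $M_*\ge2$ needed for the halving step holds since $H\le\sqrt N$ gives $M_*\ge 2^{36}$.

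It remains to control the prefactors $8N^{1/4}\cdot4c^{-3}(Y^2+2Y+2)$. Here $c^{-3}=16^3(2\sqrt N/H)^{3/2}\le 2^{14}N^{3/4}$ using $H\ge1$. Altogether the tail is at most $2^{19}N\,(Y^2+2Y+2)e^{-Y}$. Since $Y\ge 27\log N$ and $Y\ge 27\log 2^{20}>300$, we have $Y^2+2Y+2\le e^{Y/4}$. Hence the bound is at most $2^{19}N\,e^{-3Y/4}\le 2^{19}N^{1-20}$, which is below $N^{-10}$ because $N\ge2^{20}$. This final bookkeeping of the constants, namely checking that the crude numerical factors ($2^{19}$, the polynomial in $Y$, and the floor of $M_*$) are absorbed, is the only delicate step. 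The margin in the exponent is large, so the argument should go through even with generous rounding.
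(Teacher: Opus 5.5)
Your proposal is correct and follows essentially the same route as the paper: the termwise bound $8N^{1/4}m^{-1/4}e^{-\lambda m^{1/4}}$ from Lemma~\ref{lem:fourier} and $d(m)\le2\sqrt m$, then an integral comparison via $y=\lambda m^{1/4}$ giving $4\lambda^{-3}(Y^2+2Y+2)e^{-Y}$, and finally absorbing the $O(N)$ prefactor and the polynomial in $Y$ into $e^{-Y}$. The differences are cosmetic. The paper rounds the decay constant down to $\lambda=\sqrt H/(32N^{1/4})$ so that $\lambda M_*^{1/4}=16\lceil\log_2N\rceil$ exactly, and it handles the floor by bounding the first omitted term separately and integrating from $M_*$; you instead keep the sharp constant and use $M\ge M_*/2$. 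For a decreasing summand, $\sum_{m>M}\le\int_M^\infty$ already holds, so your parenthetical extra term is not even needed.
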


\begin{proof}
Put
\[\lambda=\frac{\sqrt H}{32N^{1/4}},\quad Y=\lambda M_*^{1/4}=16\lceil\log_2 N\rceil.\]
At the dual frequency, Lemma \ref{lem:fourier} gives $|F_\delta(2L\sqrt m)|\le16e^{-\lambda m^{1/4}}$; with $|\tfrac{1-i}{2\pi}|<1/4$ and $d(m)\le2\sqrt m$, this bounds the tail by
\[\sum_{m>M}d(m)|W(m)|\le8N^{1/4}\sum_{m>M}m^{-1/4}e^{-\lambda m^{1/4}}.\]
The summand is decreasing and the first omitted integer exceeds $M_*$. Bounding the sum by its first term and an integral, with the substitution $y=\lambda m^{1/4}$, gives
\[\sum_{m>M}d(m)|W(m)|\le8N^{1/4}\left[M_*^{-1/4}+4\lambda^{-3}(Y^2+2Y+2)\right]e^{-Y}\le2^{21}N(Y+2)^2 2^{-Y},\]
where $H\ge1$ and $M_*\ge1$ suffice for the last inequality. As $16\log_2N\le Y\le32\log_2N-2$, this is at most $2^{31}(\log_2N)^2N^{-15}<N^{-10}$, as desired.
\end{proof}

We have now reduced the averaged Bessel series to the finite sum in \eqref{eq:Sform2}. Its analytic error is the sum of the absolutely summable Hankel remainder, the $K_1$ contribution, and the Fourier tail: for $N\ge2^{20}$,
\[|E_{\rm analytic}|\le\frac14N^{-1/4}+2N^{-10}<\frac1{100}.\]

\paragraph{Evaluation of the weights and correction.}
By a classical quadrature argument, the samples of $W$, the main integral $J$, and each weight in the smoothing correction can be evaluated to error $2^{-p}$ in $(\log N+p)^{O(1)}$ bit operations.

Specifically, for $f\in C^\infty([-1,1];\mathbb C)$ with $\|f^{(r)}\|_\infty\le AB^r(r!)^2$ and $A,B\ge1$, use composite interpolatory quadrature \cite[\S3.5(iv)]{dlmf}: take $s=\lceil p+\log_2A+8\rceil$ equally spaced nodes on each of $\lceil8Bs\rceil$ equal subintervals. The Lagrange remainder \cite[(3.3.5)]{dlmf}, applied to real and imaginary parts, bounds the total integral error by
\[4A(Bh)^s s!\le4A4^{-s}<2^{-p},\quad h=2/\lceil8Bs\rceil.\]
Thus $O(B(p+\log_2A+1)^2)$ samples suffice, with rational nodes and weights. The absolute weights sum to at most $2\cdot6^s$, since the absolute Lagrange basis functions on each subinterval sum to at most $(2s-2)^{s-1}/(s-1)!\le6^s$; sample accuracy of $p+O(s)$ bits therefore suffices, and construction and evaluation take time polynomial in $B+p+\log A$ given the samples.

Applied to $\beta$, the rule computes $Z$, and hence $\eta$ and its partial integrals, in polynomial time. Near $\pm1$ we may set $\beta$ to zero when $1/(1-t^2)\ge p+2$, and elsewhere the exponential just has argument $O(p)$.

For $|\xi|\le Q$, the same construction applies to $F_\delta(\xi)$, using
\[\|(f_\delta(t)e(\xi t))^{(r)}\|_\infty\le8(64+2\pi Q)^r(r!)^2\]
from \eqref{eq:fdelta-gevrey}. Its cost is polynomial in $Q+p$, with input errors controlled by $|\partial_\delta F_\delta|\le1$ and $|\partial_\xi F_\delta|\le4\pi$. Throughout $1\le m\le3M_*/2$ we have $2L\sqrt m=O((\log N)^2)$, so the required samples of $W$ have the asserted cost. This also makes the polynomial in Lemma \ref{lem:weight} constructive: use real Chebyshev samples with $O(d+\log\mathcal B+p)$ accuracy bits, including the error in the sample nodes, which is controlled by Cauchy's estimate on the ellipse.

For the main integral, writing $\log u=\log N+2\log(1+\delta t)$ gives
\[\|(\eta(t)A(u))^{(r)}\|_\infty\le CN\log N C_1^r(r!)^2\]
with absolute constants $C,C_1$. The prefactor $N\log N$ costs only $O(\log N)$ additional precision bits, so $J$ is covered by the same quadrature estimate.

\begin{lemma}\label{lem:smoothing}
Given the divisor counts for $|n-N|\le H$, the smoothing correction $D(N)-S$ can be computed to error $2^{-p}$ in $(\log N+p)^{O(1)}$ time per term.
\end{lemma}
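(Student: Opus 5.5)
We start from the identity already derived in the finite dual sum paragraph,
\[D(N)-S=\sum_{|n-N|\le H}d(n)\left(\mathbf1_{\{n\le N\}}-v(n)\right),\]
whose terms are indexed by the $O(H)$ integers with $|n-N|\le H$. For each such $n$ the divisor count $d(n)$ is supplied, so the only work is to compute the weight $v(n)$ to precision $2^{-p-O(\log N)}$. After that we multiply by the integer $d(n)=O(N)$ and subtract from the indicator. The extra $O(\log N)$ bits absorb both the factor $d(n)$ and the summation of $O(H)\le O(N)$ terms, so that the total error is at most $2^{-p}$.

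The weight is computed by cases. We first decide the three cases in the definition of $v(n)$ exactly, using integer arithmetic. The condition $\sqrt n\le\sqrt N-L$ is equivalent to $4\sqrt N\sqrt n\le 4N-H$, and after squaring (when both sides are nonnegative) this becomes the integer comparison $16Nn\le(4N-H)^2$. The upper case is handled in the same way with $(4N+H)^2$. Here we assume $H$ is rational, say an integer, which the later choice $H=N^{9/28+o(1)}$ allows. In the two outer cases $v(n)\in\{0,1\}$ is exact.

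In the middle case we need $v(n)=\int_{t_n}^1\eta(t)\,dt$ with $t_n=(\sqrt n-\sqrt N)/L\in(-1,1)$.
\begin{itemize}
\item We compute $t_n$ to $p+O(\log N)$ bits by Newton iteration for the square roots, in $(\log N+p)^{O(1)}$ time.
\item Since $|\eta|\le 4$ by Lemma \ref{lem:bump}, an error $\varepsilon$ in $t_n$ changes $v(n)$ by at most $4\varepsilon$. The input error is therefore harmless.
\item For a fixed rational lower limit $t_n'$, we evaluate $\int_{t_n'}^1\eta=Z^{-1}\int_{t_n'}^1\beta$.
\end{itemize}
Both $Z$ and the partial integral come from the composite quadrature rule described above, as the paper already notes for ``$\eta$ and its partial integrals''. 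Concretely, we rescale $[t_n',1]$ affinely onto $[-1,1]$. The rescaled integrand $\beta\circ\ell$ satisfies the same Gevrey-2 bound as $\beta$, with $A=1$ and $B=36$ from the proof of Lemma \ref{lem:bump}, since the scaling factor $(1-t_n')/2\le1$ only shrinks derivatives. This gives $O((p+1)^2)$ samples. Each sample of $\beta$ is either set to zero when $1/(1-t^2)\ge p+2$, or is an exponential of an $O(p)$-size argument computed in polynomial time. The value $Z>1/4$ is computed once, and division by it costs only $O(1)$ extra bits.

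The main obstacle is the bookkeeping of precision through the three error sources: the error in $t_n$, the quadrature error, and the sample errors amplified by the weight sum $2\cdot6^s$. Each contributes polynomially many extra bits, so each term costs $(\log N+p)^{O(1)}$ time.
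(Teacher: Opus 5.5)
Your proposal is correct and follows essentially the paper's argument. Like the paper, you use a per-term error budget of $2^{-p}$ divided by a polynomial in $N$, evaluate the partial integrals of $\eta$ by quadrature, and absorb the error in $t_n$ through the Lipschitz bound $|\eta|\le4$.

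The only difference is minor. You decide the three cases exactly by integer comparisons, which needs $H$ rational. The paper instead uses that $t\mapsto\int_t^1\eta$, extended by the constants $1$ and $0$ outside $[-1,1]$, is Lipschitz on all of $\mathbb R$. An approximate $t_n$ can then be rounded or clamped freely even near $\pm1$, which also settles your rounded $t_n'$ possibly falling slightly outside $[-1,1]$.
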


\begin{proof}
Since, say, $d(n)\le3\sqrt N$ on the correction interval, it suffices to evaluate the partial integrals $v(n)$ to error $2^{-p}/(16HN)$ each. Their constant extensions beyond $[-1,1]$ are Lipschitz, allowing us to round $t_n$ as necessary.
\end{proof}

Factoring the $O(H)$ integers with $|n-N|\le H$ supplies their divisor counts in $HN^{o(1)}$ expected bit operations by Lenstra-Pomerance \cite{lenstraPomerance1992}, using $N^{o(1)}$ auxiliary space when the integers are processed sequentially.

We remark that the scale this problem requires is just shy of the current best bound for segmented sieve algorithms, due to Helfgott \cite{helfgott2019improvedsieveeratosthenes}, of sieving intervals of length $N^{1/3}$ in essentially linear time, and reducing $1/3$ to $9/28$ would suffice to replace Lenstra-Pomerance and make our algorithm deterministic. This appears to be immensely difficult; we have made no progress on this problem. The exponent $1/3$ manifests for the same reason as those listed in the introduction, linear approximation of the Dirichlet hyperbola, and indeed \cite[Section 5.1]{helfgott2019improvedsieveeratosthenes} suggests that improving sieving would require a genuinely deeper understanding of the approximation of the hyperbola, the same obstruction that we sought to circumvent. 

\paragraph{Polynomial approximation on a block.}
This is the critical property of the weights that our analytic setup gives us so that we can have Theorem \ref{thm:block}. For the block algorithm we need control of $W$ off the real axis. The defining integral of $F_\delta$ is entire, with
\begin{equation}
  |F_\delta(z)|\le\sqrt{13/12}e^{2\pi|\Im z|}\le2e^{2\pi|\Im z|}.
  \label{eq:fourier-complex}
\end{equation}
Although this bound grows exponentially, the frequencies on our blocks are only $O((\log N)^2)$, since $H\sqrt{aB/N}=O((\log N)^2)$ for $aB\le M$. This is small enough to give a polynomial approximation of polylogarithmic degree.

Rigorously speaking, so that we have a finite representation and the precision needed in Section \ref{sec:sec4} is polylogarithmic, we also need to bound the bit length of each coefficient and a total coefficient norm, though that is comparatively easy.

\begin{lemma}\label{lem:weight}
Let $a,B,K$ be positive integers with $aB\le M$ and $K\le B/2$. Write $\|P\|_{\rm coef}=\sum_j|[t^j]P|$. For every integer $p\ge1$ there is a polynomial $P$ with dyadic complex coefficients such that
\[\sup_{0\le t\le1}|W(a(B+Kt))-P(t)|\le2^{-p},\]
and $\deg P$, $\|P\|_{\rm coef}$, as well as the bit length of each coefficient, is $O((\log N)^2+p)$.
\end{lemma}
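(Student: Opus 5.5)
The plan is to treat $w(t)=W(a(B+Kt))$ as a holomorphic function of $t$ on a fixed Bernstein ellipse around $[0,1]$. We bound it there and then take its Chebyshev interpolant (or truncated Chebyshev series). Write
\[w(t)=\frac{(1-i)N^{1/4}}{2\pi}\,a^{-3/4}(B+Kt)^{-3/4}F_\delta\bigl(2L\sqrt{a}\,(B+Kt)^{1/2}\bigr).\]

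First I will fix a complex neighbourhood. Take $|t-1/2|\le 1$, or the Bernstein ellipse $E_\rho$ for $[0,1]$ with $\rho=2$, which lies inside it. Since $K\le B/2$, on this region we have $|Kt|\le \tfrac34 B<B$, so $B+Kt$ stays in the disk of radius $3B/4$ about $B$. There $(B+Kt)^{-3/4}$ and $(B+Kt)^{1/2}$ are holomorphic, with $|B+Kt|\ge B/4$. Consequently
\[|a^{-3/4}(B+Kt)^{-3/4}|\le 4^{3/4}(aB)^{-3/4}\le 3,\qquad |\Im\,2L\sqrt a(B+Kt)^{1/2}|\le 2L\sqrt{a}\,|B+Kt|^{1/2}\le 4L\sqrt{aB}.\]
Since $L=H/(4\sqrt N)$ and $aB\le M\le M_*$, Lemma~\ref{lem:truncation} gives $4L\sqrt{aB}\le H\sqrt{M_*/N}=2^{18}\lceil\log_2N\rceil^2$. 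By \eqref{eq:fourier-complex} we then get
\[\sup_{E_\rho}|w|\le \mathcal B:=N^{1/4}\exp\bigl(O((\log N)^2)\bigr),\qquad \log\mathcal B=O((\log N)^2).\]

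Next comes the approximation step. The standard Chebyshev estimate for a function holomorphic and bounded by $\mathcal B$ on $E_\rho$ bounds the error of degree-$d$ truncation or interpolation by $2\mathcal B\rho^{-d}/(\rho-1)$. Taking $d=\lceil \log_2\mathcal B+p+3\rceil=O((\log N)^2+p)$ gives error $\le 2^{-p-1}$. Converting from the Chebyshev basis on $[0,1]$ to monomials, I use that the Chebyshev coefficients are at most $2\mathcal B\rho^{-j}$. I also use that $T_j(2t-1)$ has coefficient $\ell^1$-norm at most $(3+2\sqrt2)^j$, so it is natural to pick $\rho$ larger. With $\rho=8$, the region $E_8$ for $[0,1]$ has semi-axes about $2$, so I must shrink it. Alternatively I can apply Cauchy's estimate directly to the Taylor expansion of $w$ at $t=1/2$ on the disk $|t-1/2|\le 1$. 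Then $|[(t-\tfrac12)^j]w|\le\mathcal B$, so truncating at degree $d$ gives error $\le \mathcal B\,2^{-d}\cdot 2$ on $[0,1]$. Re-expanding $(t-\tfrac12)^j$ in powers of $t$ costs a factor $(3/2)^j$ in coefficient norm, so $\|P\|_{\rm coef}\le \mathcal B(3/2)^{d+1}\cdot 2$, whose logarithm is $O((\log N)^2+p)$. This Taylor route is simpler and I will prefer it. The nominal claim ``$\|P\|_{\rm coef}=O(\cdot)$'' I read as a bound on its logarithm, i.e.\ in bits.

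Finally, I round each coefficient to a dyadic with $p+d+2$ fractional bits. This adds error at most $(d+1)2^{-p-d-2}\le 2^{-p-1}$ on $[0,1]$, and it gives bit lengths $O(\log\mathcal B+d+p)=O((\log N)^2+p)$. The main obstacle I expect is keeping the complex frequency bound uniform: the imaginary part of $2L\sqrt{a(B+Kt)}$ must be $O((\log N)^2)$ on a region of size $\asymp 1$ in $t$. This is exactly where $K\le B/2$ (keeping the square root away from its branch point) and $aB\le M$ (tying the frequency to the truncation length) enter.
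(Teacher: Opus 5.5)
Your proof is correct, but it takes a different route from the paper.

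\textbf{The paper's route.} The paper works on the thin Bernstein ellipse $\mathcal E_2$ for $[0,1]$, whose real extent is $[-1/8,9/8]$ and whose imaginary half-axis is $3/8$. There it bounds $|\Im\sqrt{a(B+Kt)}|\le\frac14\sqrt{aB}\,K/B$. It then takes the degree-$d$ Chebyshev interpolant, with error $4\mathcal B2^{-d}$, and converts to monomials using the recurrence bound $7^j$ on the coefficient norm of $T_j(2t-1)$. This gives $\|P\|_{\rm coef}\le4\mathcal B(d+1)7^d$, and rounding to multiples of $2^{-q}$ finishes.

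\textbf{Your route.} You use the fatter disk $|t-\tfrac12|\le1$, which still fits because $K\le B/2$ gives $|Kt|\le3B/4$. You bound the frequency crudely by $|\Im z|\le|z|$, which loses the factor $K/B$; that loss is harmless since $4L\sqrt{aB}=O((\log N)^2)$ already. You then truncate the Taylor series at $t=\tfrac12$ via Cauchy's estimates. This is more elementary: there is no Chebyshev theory and no basis conversion, and the coefficient growth is only $(3/2)^d$.

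\textbf{Two remarks.} First, your worry that the Chebyshev conversion would force a larger $\rho$ is unfounded. Growth like $c^d$ in $\|P\|_{\rm coef}$ is harmless, because only $\log\|P\|_{\rm coef}$ needs to be $O((\log N)^2+p)$. You read the statement this way, correctly, and the paper's own bound $4\mathcal B(d+1)7^d$ confirms that reading. Taken literally the claim would fail, since $\|P\|_{\rm coef}\ge|P(0)|\approx|W(aB)|$ can be of order $N^{1/4}$.

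Second, what the paper's route buys is constructiveness from real data. The Chebyshev interpolant is determined by samples of $W$ on $[0,1]$, and this is exactly how the paper later builds $P$: from real Chebyshev samples, with node errors controlled by Cauchy's estimate on the ellipse. It builds the polynomial $h$ in Theorem~\ref{thm:block} the same way. Your Taylor coefficients would instead have to come from complex evaluations of $F_\delta$, for example a trapezoidal Cauchy integral on a circle. That is feasible with $O((\log N)^2)$ extra bits of precision, and your disk lies inside $|t|\le2$, where the phase remainder $V$ is controlled. It is not needed, however, for the purely existential statement of this lemma.
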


\begin{proof}
Let $g(t)=W(a(B+Kt))$. On the closed Bernstein ellipse $\mathcal E_2$ for $[0,1]$, bounded by
\[t=\frac12+\frac14(z+z^{-1}),\quad |z|=2,\]
the variable $v=B+Kt$ satisfies
\[\Re v\ge15B/16,\quad |\Im v|\le3K/8,\quad |\Im\sqrt{av}|=\frac{a|\Im v|}{2\Re\sqrt{av}}\le\frac14\sqrt{aB}\frac KB.\]
In particular, $v$ stays in the right half-plane, away from the branch cut of the principal fractional powers in $g$. The function $g$ is therefore holomorphic on a neighborhood of $\mathcal E_2$, and \eqref{eq:fourier-complex} gives
\[\sup_{\mathcal E_2}|g|\le N^{1/4}(aB)^{-3/4}\exp\left(\frac\pi4H\sqrt{\frac{aB}N}\frac KB\right)\le\exp(C(\log N)^2).\]
Call the last bound $\mathcal B$. The degree-$d$ Chebyshev interpolant has error at most $4\mathcal B2^{-d}$ \cite[Theorems 8.1-8.2]{trefethen2013}, so $d=O((\log N)^2+p)$ suffices to make this at most $2^{-p-1}$.

It remains to bound the bit length after conversion to monomials. The recurrence
\[T_{j+1}(2t-1)=(4t-2)T_j(2t-1)-T_{j-1}(2t-1)\]
bounds the coefficient norm of $T_j(2t-1)$ by $7^j$, by induction, and hence that of the interpolant by $4\mathcal B(d+1)7^d$. Put $q=p+\lceil\log_2(d+1)\rceil+2$ and round the real and imaginary parts of each coefficient to multiples of $2^{-q}$. Each coefficient changes by at most $2^{-q}$, adding at most $(d+1)2^{-q}\le2^{-p-2}$ to the error on $[0,1]$. The resulting coefficients have the form $2^{-q}(u+iv)$ with integers $u,v$ of bit length $O(q+\log\mathcal B+d)=O((\log N)^2+p)$, as desired.
\end{proof}

We now have all of the requisite estimates for the correctness of our analytic setup.

\section{Computing \texorpdfstring{$S(a,B,K)$}{S(a,B,K)}}
\label{sec:sec4}

On a sufficiently short block, the part of the phase beyond degree four can be absorbed into the weight without spoiling its polynomial approximation. This reduces the block to a short linear combination of table entries. The length restriction matters, however: the saving depends on how many blocks are needed to cover the rows.

Call $(a,B,K)$ admissible if
\begin{equation}
  a\le B,\quad a(B+K-1)\le M,\quad 8K\le B,\quad 2^{20}NaK^{10}\le B^9.
  \label{eq:admissible-block}
\end{equation}
The last condition is the quartic curvature restriction, bounding the Taylor remainder on a fixed complex neighborhood of $[0,1]$ for us to use the approximation argument of Lemma \ref{lem:weight}.

\begin{theorem}\label{thm:block}
For $0<\varepsilon<1$, an admissible $S(a,B,K)$ can be evaluated to error $\varepsilon$ in $(\log N+\log(1/\varepsilon))^{O(1)}$ time, given the moment tables $T_0,T_1,\dots,T_d$ up to degree
\[d=O\left((\log N)^2+\log(1/\varepsilon)\right)\]
with $O((\log N)^2+\log(1/\varepsilon))$ bits of precision.
\end{theorem}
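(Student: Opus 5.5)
We will split the phase $\phi(t)=2\sqrt{NaB}(1+\frac KBt)^{1/2}$ into an integer quartic part and a small holomorphic remainder. Then we approximate the remaining smooth factor $g(t)e(V(t))$ on $[0,1]$ by a polynomial $h$ of degree $d$, in the manner of Lemma \ref{lem:weight}. Finally we read off $S(a,B,K)\approx\sum_r h_r T_r(K;q_1,\dots,q_4)$ from the tables.

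\emph{Step 1: the integer part of the phase.} Expand $\phi(t)=\sum_{j\ge0}c_jt^j$ with
\[c_j=2\sqrt{NaB}\binom{1/2}{j}(K/B)^j.\]
Set $q_j=\lfloor c_j\rceil$ for $j\le4$, and $V(t)=\sum_{j\le4}(c_j-q_j)t^j+\sum_{j\ge5}c_jt^j$. The $c_j$ must be computed to $O(\log N)$ bits beyond their integer parts, which takes polylogarithmic time. We reduce $q_j$ modulo $K^j$ to obtain the table key; $q_0$ drops out, since $e(q_0)=1$.

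\emph{Step 2: bounding $V$ on a complex neighbourhood.} On the Bernstein ellipse $\mathcal E_2$ we have $|t|\le 9/8$, or we may use a smaller ellipse $\mathcal E_\rho$ if convenient. The rounding part then contributes at most $\sum_{j\le4}\frac12(9/8)^j=O(1)$. For the tail, $|\binom{1/2}{j}|\le1$ and $(9K/8B)\le 9/64$ give
\[\Big|\sum_{j\ge5}c_jt^j\Big|\le 4\sqrt{NaB}(9K/8B)^5.\]
The admissibility condition $2^{20}NaK^{10}\le B^9$ says exactly that $\sqrt{NaB}(K/B)^5\le 2^{-10}$, so this tail is $O(1)$ as well. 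Hence $|\Im V|=O(1)$ on $\mathcal E_2$, and $|e(V)|\le e^{O(1)}$ there. Holomorphy of $V$ follows because $1+Kt/B$ stays in the right half-plane, as in the proof of Lemma \ref{lem:weight}.

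\emph{Step 3: polynomial approximation and assembly.} The proof of Lemma \ref{lem:weight} already shows $\sup_{\mathcal E_2}|g|\le\mathcal B=\exp(C(\log N)^2)$. So $ge(V)$ is holomorphic on $\mathcal E_2$ with the same type of bound, and its Chebyshev interpolant of degree $d=O((\log N)^2+\log(K/\varepsilon))$ has sup error at most $\varepsilon/(3K)$ on $[0,1]$. Since $K\le M\le N$, we have $\log K=O(\log N)$. To make the interpolant constructive, we sample $W$ (cost per the quadrature discussion in Section \ref{sec:estimates}) and $e(V)$ at the Chebyshev nodes. We convert to monomials and round the coefficients to $2^{-q}$ exactly as in Lemma \ref{lem:weight}, obtaining $h$ with $\|h\|_{\rm coef}\le\exp(O((\log N)^2+d))$.

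The block sum equals
\[\sum_k e\Big(\sum_j q_jt^j\Big)g(t)e(V(t)),\]
which differs from $\sum_r h_rT_r$ by at most $K\|ge(V)-h\|_\infty\le\varepsilon/3$.

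\emph{Main obstacle: precision bookkeeping.} Table entries carry errors, and these are amplified by $\|h\|_{\rm coef}$. If each $T_r$ is known to absolute error $\tau$, the total error is at most $\|h\|_{\rm coef}\tau$. Choosing
\[\log_2(1/\tau)=O((\log N)^2+\log(1/\varepsilon))\]
bits keeps this below $\varepsilon/3$, which matches the precision stated in Theorem \ref{thm:block}. The point needing care is that the large $\|h\|_{\rm coef}$, which is exponential in $d$, is still only $2^{O(d)}$, so polylogarithmically many bits suffice. A related point is that $|T_r|\le K$, so products stay polylogarithmic in bit length. The final combination is $d+1$ multiply-adds at this precision, giving $(\log N+\log(1/\varepsilon))^{O(1)}$ time in total.
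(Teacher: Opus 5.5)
Your proposal is correct and follows essentially the same route as the paper. You round $\alpha_0,\dots,\alpha_4$ to integers, use $2^{20}NaK^{10}\le B^9$ to keep the residual phase $V$ bounded near $\mathcal E_2$, build $h$ for $g\,e(V)$ via the Chebyshev/monomial construction of Lemma \ref{lem:weight}, and contract against the tables keyed by the residues mod $K^j$, with $\log\|h\|_{\rm coef}=O((\log N)^2+\log(1/\varepsilon))$ fixing the precision. The only differences are cosmetic: you bound $V$ directly on $\mathcal E_2$ via $|t|\le 9/8$ rather than on the disk $|t|\le 2$, and you split the error budget into thirds rather than halves.
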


\begin{proof}
Write
\[\phi(t)=\sum_{j\ge0}\alpha_jt^j,\quad\alpha_j=2\sqrt{NaB}\binom{1/2}{j}(K/B)^j,\]
and round $\alpha_0,\ldots,\alpha_4$ to nearest integers $q_0,\ldots,q_4$. Put
\[V(t)=\phi(t)-\sum_{j=0}^4q_jt^j.\]
The rounding leaves bounded errors in the first five coefficients. For the higher coefficients, admissibility gives, on $|t|\le2$,
\[\left|\sum_{j\ge5}\alpha_jt^j\right|\le2\sqrt{NaB}\frac{(2K/B)^5}{1-2K/B}\le\frac{256}{3}\sqrt{\frac{NaK^{10}}{B^9}}\le\frac1{12},\]
using $|\binom{1/2}{j}|\le1$. Hence $|V(t)|\le31+1/12<32$ on this disk, and absorbing the residual phase $e(V(t))$ costs only an absolute factor in the complex bound for the weight -- although we can dismiss it so easily now this is the entire reason for the curvature restriction as otherwise $V(t)$ would be sigificantly worse than $O((\log N)^2)$ and promptly destroy the following bound.

Since $\mathcal E_2$ lies inside $|t|<2$, the product
\[f(t)=g(t)e(V(t)),\quad g(t)=W(a(B+Kt)),\]
is holomorphic near $\mathcal E_2$ and bounded there by $\exp(C(\log N)^2)$. The interpolation and coefficient rounding from Lemma \ref{lem:weight} give a polynomial $h$ with dyadic complex coefficients satisfying
\begin{equation}
  \|f-h\|_{\infty,[0,1]}\le\frac{\varepsilon}{2K},
  \quad
  \deg h+\log(1+\|h\|_{\rm coef})=O\left((\log N)^2+\log(1/\varepsilon)\right).
  \label{eq:block-polynomial}
\end{equation}
Its real samples are computed by evaluating $g$ as in Section \ref{sec:estimates} and subtracting $\sum_{j=0}^4q_jt^j$ from $\phi(t)$ to obtain $V$. Both $\phi$ and the $q_j$ have $O(\log N)$ integer bits, so this subtraction and the interpolation take polynomial time in the stated precision parameters.

At the sample points $t=k/K$, replacing $q_j$ in the polynomial phase by its residue $\bar q_j$ modulo $K^j$ leaves the exponential unchanged; $V$ remains as defined with the original integers. With $e(q_0)=1$, this gives
\begin{equation}
  S(a,B,K)=\sum_{r=0}^{\deg h}h_rT_r(K;\bar q_1,\bar q_2,\bar q_3,\bar q_4)+E,\quad |E|\le\varepsilon/2.
  \label{eq:block-contraction}
\end{equation}
Moment errors of at most $\varepsilon/[2(1+\|h\|_{\rm coef})]$ contribute less than $\varepsilon/2$ more. By \eqref{eq:block-polynomial}, the required precision is still $O((\log N)^2+\log(1/\varepsilon))$ bits, and the finite sum can be formed exactly from the dyadic approximations.
\end{proof}

To count the blocks, impose a length cap $U$ to limit the size of the tables. Restricting lengths to powers of two loses at most a factor two when rounding a permitted length down, and makes the total table cost over all lengths comparable to that of the largest. The following estimate separates the contribution of the cap from that of curvature, counting an individually evaluated summand as one unit of work.

\begin{lemma}\label{lem:block-count}
Fix $R\ge2$ and $c>0$. For $2\le M\le N$ and $1\le U\le M$, the rows $a\le b\le\lfloor M/a\rfloor$ can be partitioned into individual summands and blocks of dyadic lengths satisfying
\[K\le U,\quad 8K\le B,\quad NaK^{2R+2}\le cB^{2R+1},\]
with total work
\begin{equation}
  O_{R,c}\left(\frac{M\log(2M)}U+N^{\frac1{2R+2}}M^{\frac{R+2}{2R+2}}+N^{\frac1R}+\sqrt M\log(2M)\right).
  \label{eq:block-count}
\end{equation}
If $M\ge N^{1/R}$, every such partition has work
\begin{equation}
  \gg_{R,c}\max\left\{\frac MU,N^{\frac1{2R+2}}M^{\frac{R+2}{2R+2}}\right\}.
  \label{eq:curvature-lower}
\end{equation}
The lower bound holds without the requirement that lengths be dyadic.
\end{lemma}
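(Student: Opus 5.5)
The plan is a greedy construction for the upper bound and a local length bound for the lower bound. For the upper bound I would process each row $a\le\sqrt M$ separately; the row is the range $b\in[a,\lfloor M/a\rfloor]$. I would split it dyadically into ranges $b\in[2^i,2^{i+1})$. On such a range the permitted block length is
\[\kappa(a,B)=\min\left\{U,\ B/8,\ (cB^{2R+1}/(Na))^{1/(2R+2)}\right\}\]
with $B\asymp 2^i$. Since all three constraints are monotone in $B$, it suffices to use the value at the left endpoint $B=2^i$; this loses only a constant factor in $c$.

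On each dyadic range I would take $K$ to be the largest power of two with $K\le\kappa(a,2^i)$ and cover $[2^i,2^{i+1})$ greedily by consecutive blocks of length $K$. If $K<1$, I would use individual summands instead. The leftover at the right end has fewer than $K$ terms, and I would cover it by dyadically decreasing blocks. This costs $O(\log)$ extra pieces per range, giving $O(\log^2 M)$ per row, which should be absorbed into the $\sqrt M\log(2M)$ term after summing over $\sqrt M$ rows. The truncation at $\lfloor M/a\rfloor$ is handled the same way. The count on a range is then $O(2^i/K+\log)$, and I would sum $2^i/\kappa(a,2^i)$ by bounding
\[\frac{1}{\kappa}\le\frac1U+\frac{8}{B}+\left(\frac{Na}{cB^{2R+1}}\right)^{1/(2R+2)},\]
which splits the total into three sums.

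The first sum, $\sum 2^i/U$ over ranges, is at most $\sum_a (M/a)/U\ll M\log(2M)/U$. The second, $8\cdot\#$ranges, is $O(\sqrt M\log M)$. For the curvature sum the work in row $a$ is
\[\sum_i 2^i(Na)^{1/(2R+2)}2^{-i(2R+1)/(2R+2)}\asymp (Na)^{1/(2R+2)}(M/a)^{1/(2R+2)},\]
a geometric sum dominated by its top term. This is independent of $a$ in the product, giving $N^{1/(2R+2)}M^{1/(2R+2)}$ per row. Summed over $a\le\sqrt M$ this gives $N^{1/(2R+2)}M^{1/(2R+2)+1/2}=N^{1/(2R+2)}M^{(R+2)/(2R+2)}$, as required.

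There is a catch for small $B$ when $\kappa<1$, where we use individual summands. That happens when $B^{2R+1}<Na/c$, so those summands number at most $\sum_a \min(M/a,(Na)^{1/(2R+1)})$. I expect this to be where the $N^{1/R}$ term comes from, and it is the step I would check most carefully. The bound $\min(M/a,(Na)^{1/(2R+1)})$ is maximised near $a^{(2R+2)/(2R+1)}\asymp MN^{-1/(2R+1)}$. The sum over $a$ up to the crossover gives roughly $N^{1/(2R+1)}a_*^{(2R+2)/(2R+1)}$ and, beyond it, $M\log$. I also need the constraint $b\ge a$, i.e.\ $B\ge a$. With it, the individually treated terms occur only where $a\le B<(Na/c)^{1/(2R+1)}$, which forces $a^{2R}<N/c$, i.e.\ $a\ll N^{1/(2R)}$. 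Each such row contributes at most $(Na)^{1/(2R+1)}$ terms, so the total is $\ll N^{1/(2R+1)}\cdot N^{(2R+2)/(2R(2R+1))}=N^{1/R}$. That is exactly the third term.

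For the lower bound I would argue locally. Any block starting at $B$ that covers position $b$ has $K\le U$. Since $8K\le B$, the block lies within $b\asymp B$, and the curvature condition gives $K\ll (B^{2R+1}/(Na))^{1/(2R+2)}$ with $B\asymp b$. So each piece covering $b$ has length at most $\kappa'(a,b)\asymp\min(U,b/8,(b^{2R+1}/(Na))^{1/(2R+2)})$, up to constants. The work is therefore at least $\sum_{a,b}1/\max(1,\kappa'(a,b))$, because each piece contributes $1$ and covers at most $\max(1,\kappa')$ points. The $1/U$ part gives $\gg\sum_a (M/a)/U\gg M/U$. For the curvature part I would restrict to $b\in[M/(2a),M/a]$ and $a\le\sqrt M/2$, where $b\ge a$ holds. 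There the curvature length is $(M^{2R+1}/(Na^{2R+2}))^{1/(2R+2)}$, which is $\ge1$ and at most $b/8$ once $M\ge N^{1/R}$ and the constants are adjusted, possibly after restricting to a subrange of $a$. Each row then needs $\gg N^{1/(2R+2)}M^{1/(2R+2)}$ pieces, giving $\gg N^{1/(2R+2)}M^{(R+2)/(2R+2)}$ after summing over $a\asymp\sqrt M$. This argument never uses dyadic lengths. The main obstacle I anticipate is bookkeeping the regime boundaries (where $\kappa<1$, and where $b/8$ binds) so that they produce precisely the $N^{1/R}$ and $\sqrt M\log$ terms rather than something larger.
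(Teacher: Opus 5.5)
Your proposal is correct and is essentially the paper's argument. Like the paper, you use a greedy cover by dyadic blocks whose count is bounded by summing the reciprocals of the three length constraints, with $b\ge a$ confining the exceptional summands to rows $a\ll N^{1/(2R)}$ (which gives the $N^{1/R}$ term), and a lower bound from the local maximal piece length $\min\{U,b/8,(b^{2R+1}/(Na))^{1/(2R+2)}\}$. The differences are bookkeeping only: you fix one length per dyadic $b$-range rather than refilling greedily along the row, and you use a weighted count over $M/2\le ab\le M$ rather than the paper's rectangle with $a,b\asymp\sqrt M$.

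There is one slip, in the fragment count: $O(\log^2M)$ fragments per row over $\sqrt M$ rows gives $\sqrt M\log^2M$, not $\sqrt M\log(2M)$. To repair it, note that row $a$ meets only $O(1+\log(M/a^2))$ dyadic ranges and that $\sum_{a\le\sqrt M}\log(M/a^2)=O(\sqrt M)$ by Stirling. Alternatively, since $M\le N$, you can absorb $\sqrt M\log^2M\ll_R M^{1/2+1/(R+1)}\le N^{1/(2R+2)}M^{(R+2)/(2R+2)}$ into the curvature term.
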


\begin{proof}
Starting at $B=a$ in each row, take the largest permitted dyadic length that fits in the remaining row. If even length one is forbidden, evaluate that summand directly and advance $B$ by one. Such exceptions have $B<8$ or $B^{2R+1}<Na/c$; since $a\le B$, they lie in $B\ll_{R,c}N^{1/(2R)}$ and number $O_{R,c}(N^{1/R})$ in total.

A block limited by the remaining row length removes more than half of that remainder, giving $O(\log(2M))$ final fragments per row. On every other block, dyadic rounding loses at most a factor two and $B\le b\le9B/8$. We can therefore count these blocks by integrating the reciprocals of the three length bounds. The cap $U$ contributes $O(M\log(2M)/U)$, and the bound $B/8$ contributes $O(\sqrt M\log(2M))$. For curvature, put $\theta=1/(2R+2)$; the contribution is at most
\[C_{R,c}N^\theta\sum_{a\le\sqrt M}a^\theta\int_a^{M/a}B^{-1+\theta}dB\ll_{R,c}N^\theta M^{1/2+\theta}.\]
The endpoint errors in these integral comparisons cost at most one further block per row. Together with the exceptions and final fragments, this proves \eqref{eq:block-count}.

For the lower bound it suffices to consider the $\gg M$ pairs in the rectangle
\[\frac{\sqrt M}{4}\le a\le\frac{\sqrt M}{2},\quad \sqrt M\le b\le\frac{3\sqrt M}{2}.\]
Any admissible block meeting this rectangle has $B\asymp\sqrt M$, by $K\le B/8$, and hence length at most
\[\min\{U,C_{R,c}N^{-\theta}M^{R/(2R+2)}\}.\]
For $M\ge N^{1/R}$ this also bounds the length of an individual summand, after increasing the constant. Dividing the number of pairs by the maximum length proves \eqref{eq:curvature-lower}, with bounded $M$ absorbed by the implied constant.
\end{proof}

This count explains why we retained four terms of the nonconstant phase. With $H=N^{h+o(1)}$ and $M=N^{1-2h+o(1)}$, the curvature term has exponent
\[\frac{1+(R+2)(1-2h)}{2R+2};\]
keeping it at most $h$ requires $h\ge(R+3)/(4R+6)$, so even before paying for a moment table, $R=3$ cannot improve on $1/3$ by this scheme.

For quartic phases, the unrestricted tables for dyadic $K\le U$ cost $\widetilde O(U^{10})$ in total (Lemma \ref{lem:table}), and balancing this against the cap contribution $\widetilde O(M/U)$ selects $U\asymp M^{1/11}$. With $R=4$ and $c=2^{-20}$ in \eqref{eq:block-count}, and the diagonal terms of \eqref{eq:SformTargetSum} included, the cost is
\begin{equation}
  \widetilde O\left(M^{10/11}+N^{1/10}M^{3/5}+N^{1/4}+\sqrt M\right).
  \label{eq:unrestricted-cost}
\end{equation}
Taking $H=N^{10/31+o(1)}$ gives $M=N^{11/31+o(1)}$, so the first term balances the smoothing cost, while the curvature term has exponent $97/310<10/31$. The unrestricted table thus gives an $N^{10/31+o(1)}$ algorithm.

The improvement to $N^{9/28+o(1)}$ comes from the fact that the fourth Taylor coefficient becomes small when $aB$ is large, leaving many fourth residues unused. We first construct the table in essentially its own size -- completing the $N^{10/31+o(1)}$ algorithm -- in a form that allows this restriction; the actual construction to exploit it is saved for Section \ref{sec:sec6}.

\section{Precomputing \texorpdfstring{$T_r(K;q_1,q_2,q_3,q_4)$}{the moment table}}
\label{sec:sec5}

The table can be constructed one Fourier transform at a time. Fixing $q_2,q_3,q_4$ and $r$ leaves $K$ moments indexed by $q_1$; their simultaneous evaluation costs $O(K\log(2K))$ operations by Bluestein's algorithm \cite{bluestein1970}. Thus the construction takes essentially the size of the table.

\begin{lemma}\label{lem:table}
For integers $K\ge1$, $d\ge0$, and $p\ge1$, all moments
\[T_r(K;q_1,q_2,q_3,q_4)=\sum_{k=0}^{K-1}(k/K)^re\left(\sum_{j=1}^4q_j(k/K)^j\right),\quad 0\le r\le d,\quad 0\le q_j<K^j,\]
with $0^0=1$, can be computed to error $2^{-p}$ in
\[K^{10}(d+p+\log(2K))^{O(1)}\]
bit operations and bits of storage.
\end{lemma}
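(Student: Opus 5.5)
We organize the table as $K^2\cdot K^3\cdot K^4\cdot(d+1)$ one-dimensional transforms. Fix $r\le d$ and residues $q_2,q_3,q_4$, and put
\[c_k=(k/K)^r\,e\!\left(\frac{q_2k^2}{K^2}+\frac{q_3k^3}{K^3}+\frac{q_4k^4}{K^4}\right),\quad 0\le k<K.\]
Then $T_r(K;q_1,q_2,q_3,q_4)=\sum_{k<K}c_k\,e(q_1k/K)$, which is a length-$K$ discrete Fourier transform of $(c_k)$ evaluated at all $q_1\in\{0,\dots,K-1\}$. This uses only that $e(q_1k/K)$ depends on $q_1$ modulo $K$. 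Since $K$ is arbitrary and not a power of two, we use Bluestein's algorithm \cite{bluestein1970}. Writing $q_1k=\tfrac12(q_1^2+k^2-(q_1-k)^2)$ turns the DFT into a convolution, which we evaluate with a power-of-two FFT of length at most $4K$. This costs $O(K\log(2K))$ arithmetic operations per transform. The number of transforms is $(d+1)K^2K^3K^4=(d+1)K^9$, so the total operation count is $(d+1)K^{10}\log(2K)$, as claimed up to the precision factor.

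The remaining work is bookkeeping of bits. We plan to carry every quantity as a dyadic complex number with $p'=p+C\log(2K)+C$ bits after the binary point, for a suitable absolute constant $C$.

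\begin{itemize}
\item \textbf{The inputs $c_k$.} The phases are the rationals $(q_2k^2K^2+q_3k^3K+q_4k^4)/K^4$ modulo $1$, so we reduce the integer numerator modulo $K^4$ exactly, using $O(\log K)$-bit integers. We then evaluate $e(\cdot)$ and $(k/K)^r$ to $p'$ bits in time polynomial in $p'+\log(r+1)$ by standard methods. Here $(k/K)^r\in[0,1]$ is computed by repeated squaring with rounding, so the relative error grows only by a factor $O(\log r)$ in bits.
\item \textbf{The Bluestein twiddles.} The chirps $e(\pm k^2/(2K))$ and the FFT roots of unity are computed once per $K$, again to $p'$ bits.
\item \textbf{Stability of the FFT.} The FFT is numerically stable. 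With all inputs bounded by $1$ in modulus, a length-$L$ FFT in fixed point at working precision $p'$ incurs error $O(L\log L\cdot 2^{-p'})$ in the unnormalized output. The convolution via forward and inverse FFT multiplies the errors by at most $\mathrm{poly}(K)$. Hence $O(\log K)$ guard bits bring the final error below $2^{-p}$.
\end{itemize}

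Each arithmetic operation then costs $(p+\log(2K))^{O(1)}$ bit operations. Together with the $(d+1)$ factor, this gives the bit-operation bound $K^{10}(d+p+\log(2K))^{O(1)}$.

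For storage, the output table has $(d+1)K^{10}$ entries of $O(p+\log K)$ bits each, and the working memory for a single transform is only $O(K(p+\log K))$ bits. The storage bound therefore follows.

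The step most likely to need care is the error analysis of Bluestein's convolution. The chirps have modulus one but are multiplied together, and the intermediate convolution values can reach size $K$. I would handle this with a crude bound: every intermediate quantity has modulus at most $4K$, and each of the $O(\log K)$ butterfly levels adds an absolute error of at most $O(K\cdot 2^{-p'})$. This gives an $O(K^2\log K\cdot2^{-p'})$ total error, which suffices because only polynomial losses in $K$ need to be absorbed. The other parts of the argument are direct consequences of the definition.
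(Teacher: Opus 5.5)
Your proposal is correct and is essentially the paper's proof. For each of the $(d+1)K^9$ choices of $(r,q_2,q_3,q_4)$ you compute one length-$K$ positive-sign DFT in $q_1$ by Bluestein's algorithm, reducing the rational phases exactly and working with $p+O(\log(2K))$ bits since the inputs have modulus at most one. Your error bookkeeping is more explicit than the paper's, but it has two harmless slips. First, evaluating $(k/K)^r$ by rounded repeated squaring needs $O(\log(d+1))$ further guard bits beyond your $p'$; alternatively compute it exactly with $O(d\log(2K))$ bits, as the paper does. Second, intermediate Bluestein quantities such as the pointwise product of the two transforms can be of size $\mathrm{poly}(K)$ rather than $4K$, which, as you note, still costs only $O(\log K)$ bits.
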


\begin{proof}
For fixed $q_2,q_3,q_4,r$, form the vector
\[v_k=(k/K)^re\left(q_2(k/K)^2+q_3(k/K)^3+q_4(k/K)^4\right).\]
Its positive-sign Fourier transform is exactly the required set of moments:
\[T_r(K;q_1,q_2,q_3,q_4)=\sum_{k=0}^{K-1}v_ke(q_1k/K),\quad 0\le q_1<K.\]
There are $(d+1)K^9$ such transforms, each requiring $K$ samples and $O(K\log(2K))$ arithmetic operations. To form the samples, reduce the rational phases modulo one exactly and evaluate the factors $(k/K)^r$, which have $O(d\log(2K))$ bits. Since $|v_k|\le1$, $p+O(\log(2K))$ bits of working precision suffice for the transforms, giving the stated bit cost.
\end{proof}

In particular, these transforms never mix fourth coordinates. If we need only an explicit set $\mathcal Q\subseteq\{0,\ldots,K^4-1\}$ of such, we construct just the corresponding transforms, at cost
\[K^6|\mathcal Q|(d+p+\log(2K))^{O(1)}.\]
If we can therefore restrict the fourth coordinate so as to reduce the size of the table necessary for the computation of $S(a,B,K)$, construction time falls by the same factor.

\section{Dyadic shells}
\label{sec:sec6}

To restrict the fourth coordinate, group the pairs $a\le b$, $ab\le M$ into dyadic product ranges $X\le ab<2X$, where $X=1,2,4,\ldots$. Each such shell will have its own table and length cap $U_X$. For a block beginning at $B$, the fourth Taylor coefficient is
\begin{equation}
  \alpha_4=-\frac5{64}\sqrt{Na}B^{-7/2}K^4=-\frac5{64}\sqrt N(aB)^{-3/2}(a/B)^2K^4.
  \label{eq:shell-fourth-coefficient}
\end{equation}
The factor $(a/B)^2\le1$ gives a bound uniform over all rows of the shell:
\[|\alpha_4|\le(5/64)\sqrt N X^{-3/2}K^4.\]
Thus for $X$ large compared with $N^{1/3}$, the fourth coefficient occupies only a small part of the full residue range of length $K^4$.

To count the required residues, put
\[C_X=\min(1,2^{j_X}),\quad j_X=\min\{j\in\mathbb Z:2^{2j}X^3\ge N\}.\]
When $C_X<1$ we have $\sqrt N X^{-3/2}\le C_X<2\sqrt N X^{-3/2}$. Since $\alpha_4$ is negative and $|q_4-\alpha_4|\le1$, all required fourth residues belong to
\[\mathcal Q_{X,K}=
\begin{cases}
\{0,\ldots,K^4-1\},&C_X=1,\\
\{-\lceil C_XK^4\rceil-1,\ldots,1\}\pmod{K^4},&C_X<1.
\end{cases}\]
Keeping each residue only once gives $|\mathcal Q_{X,K}|=O(1+C_XK^4)$, so at the degrees and precision required by Theorem \ref{thm:block}, the restricted table costs $\widetilde O(K^6+C_XK^{10})$. How, then, should we choose the block length caps $U_X$ to match the smaller table? It would contribute $\widetilde O(X/U_X)$ blocks, while the tables for all dyadic $K\le U_X$ together cost
\[\widetilde O(U_X^6+C_XU_X^{10}).\]
Balancing the second term against the block count, take $U_X$ to be the largest dyadic integer with $C_XU_X^{11}\le X$, so $U_X\asymp(X/C_X)^{1/11}$. The extra term $U_X^6$, arising from rounding the fourth coefficient, is also covered by $X/U_X$ in the range $M\le N^{2/5}$ needed below: if $C_X=1$, then $U_X^7\le U_X^{11}\le X$, and otherwise
\[\frac{U_X^7}{X}\le X^{-4/11}C_X^{-7/11}\le N^{-7/22}X^{13/22}\le1.\]
Thus each shell's table costs no more than its cap contribution to the block count. It remains to sum these contributions and check the shorter blocks forced by curvature.

\begin{theorem}\label{thm:shells}
Suppose $2\le M\le N^{2/5}$. For any fixed $A>0$, the finite sum in \eqref{eq:Sform2} can be evaluated to error $N^{-A}$ in
\begin{equation}
  \widetilde O\left(N^{1/22}M^{17/22}+N^{10/33}+N^{1/10}M^{3/5}+N^{1/4}+\sqrt M\right)
  \label{eq:shell-total-cost}
\end{equation}
bit operations and bits of storage.
\end{theorem}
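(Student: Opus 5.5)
We follow the plan already sketched in the shell discussion. First we reduce the target sum to admissible blocks. By \eqref{eq:SformTargetSum}, the finite sum in \eqref{eq:Sform2} is at most $O(\sqrt M)$ diagonal terms $W(a^2)e(2a\sqrt N)$ plus the row sums over $a\le b\le\lfloor M/a\rfloor$. Each diagonal term is evaluated directly, using the quadrature of Section~\ref{sec:estimates}, at polylogarithmic cost; this accounts for the $\sqrt M$ term. Each row is intersected with the dyadic shells $X\le ab<2X$, which splits it into at most $O(\log M)$ intervals of $b$. Within shell $X$, we partition every such interval exactly as in the greedy procedure in the proof of Lemma~\ref{lem:block-count}, with $R=4$, $c=2^{-20}$, cap $U=U_X$, and blocks required to stay inside the shell. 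The admissibility conditions \eqref{eq:admissible-block} then hold: $a\le B$ by construction, $a(B+K-1)<2X\le 2M$ (we shrink the last shell to $\le M$), $8K\le B$, and the curvature inequality. Forbidden summands and fragments are handled individually. Each block is evaluated by Theorem~\ref{thm:block} with $\varepsilon=N^{-A}/M$, using the shell's restricted table. This is valid because the required fourth residue $\bar q_4$ lies in $\mathcal Q_{X,K}$: by \eqref{eq:shell-fourth-coefficient} and $B\ge a$, we have $|\alpha_4|\le C_XK^4$, $\alpha_4<0$, and $|q_4-\alpha_4|\le 1/2$. Summing errors over at most $M$ blocks gives total error $N^{-A}$, and all precisions remain $O((\log N)^2+A\log N)$.

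Next we count the work per shell. The count in Lemma~\ref{lem:block-count} localizes: the cap contributes $O(X\log(2M)/U_X)$ blocks in shell $X$, since the shell contains $O(X\log X)$ pairs. Fragments add $O(\log M)$ per row per shell, hence $\widetilde O(\sqrt M)$ in total. Exceptions contribute $O(N^{1/4})$. The curvature contribution, summed over all shells, is still bounded by the integral in that proof, giving $\widetilde O(N^{1/10}M^{3/5})$. By the discussion before the theorem, the table for shell $X$ costs $\widetilde O(U_X^6+C_XU_X^{10})=\widetilde O(X/U_X)$ when $M\le N^{2/5}$. It therefore remains to bound $\sum_X X/U_X$ over dyadic $X\le M$.

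For the main estimate, $U_X\asymp (X/C_X)^{1/11}$ gives $X/U_X\asymp X^{10/11}C_X^{1/11}$. We split the sum at $X_0\asymp N^{1/3}$. For $X\le X_0$ we have $C_X=1$, and the sum is geometric, dominated by its top term, giving $X_0^{10/11}=N^{10/33}$. For $X>X_0$ we have $C_X\asymp \sqrt N X^{-3/2}$, so the terms are $N^{1/22}X^{10/11-3/22}=N^{1/22}X^{17/22}$, again geometric and dominated by $X=M$. This yields $N^{1/22}M^{17/22}$. We also add the cost of rounding $U_X$ to a dyadic value, which loses only a constant factor.

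The step needing most care is the localization to shells. The block count of Lemma~\ref{lem:block-count} must hold with a shell-dependent cap. Splitting rows at shell boundaries must not create too many fragments: it costs only $O(\log M)$ per row, which is fine. Blocks must not straddle shells, since otherwise a block could need a fourth residue outside $\mathcal Q_{X,K}$; forcing blocks to end at shell boundaries handles this. We must also confirm that the uniform bound $|\alpha_4|\le\frac5{64}\sqrt N X^{-3/2}K^4$ uses $aB\ge X$ at the block's start, which holds because blocks are contained in the shell. Finally, storage is bounded by the largest table plus polylogarithmic working space, which gives the same bound as time.
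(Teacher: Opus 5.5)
Your proposal is correct and follows the paper's proof essentially step by step. Like the paper, you run the greedy partition shell by shell with cap $U_X$ and stop blocks at the shell boundary. You localize the block count of Lemma~\ref{lem:block-count}, absorb each shell's restricted table cost into $X/U_X$ via the computation preceding the theorem, and split $\sum_X X/U_X$ at $X\asymp N^{1/3}$ to obtain $N^{10/33}+N^{1/22}M^{17/22}$.

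One small bookkeeping slip needs fixing. The number of evaluations can be of order $M\log M$ rather than at most $M$, and the off-diagonal terms in \eqref{eq:SformTargetSum} carry weight $2$, so your per-block choice $\varepsilon=N^{-A}/M$ does not quite give total error $N^{-A}$. Take $\varepsilon=N^{-A-3}$ per evaluation instead, as the paper does, using that the total absolute multiplicity is below $N^3$.
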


\begin{proof}
In each shell use the greedy partition of Lemma \ref{lem:block-count}, with cap $U_X$ and blocks stopped at the shell boundary. The cap contributes $O(X\log(2X)/U_X)$ blocks; row endpoints and the restriction $8K\le B$ contribute $O(\sqrt X\log(2X))$. For curvature, the integral comparison now has upper endpoint $2X/a$, giving
\[N^{1/10}\sum_{a\le\sqrt{2X}}a^{1/10}\int_a^{2X/a}B^{-9/10}\,dB\ll N^{1/10}X^{3/5}.\]
Across all shells, the individually evaluated exceptions still satisfy $B<8$ or $B^8<2^{20}N$, and number $O(N^{1/4})$. Summing the row and curvature terms geometrically, and including the diagonal in \eqref{eq:SformTargetSum}, leaves the total cost
\begin{align*}
  \widetilde O\left(\sum_X\frac X{U_X}+N^{1/10}M^{3/5}+N^{1/4}+\sqrt M\right),
\end{align*}
with table construction already covered by the first term.

Only the sum over caps remains. The shells with $C_X=1$ have $X<(4N)^{1/3}$ and $X/U_X\asymp X^{10/11}$, so together they cost $O(N^{10/33})$. On the larger shells the saving in the fourth coordinate applies:
\[\frac X{U_X}\asymp C_X^{1/11}X^{10/11}\asymp N^{1/22}X^{17/22}.\]
Their costs sum geometrically to $O(N^{1/22}M^{17/22})$, completing \eqref{eq:shell-total-cost}.

Taking error $N^{-A-3}$ per evaluation suffices, since the total absolute multiplicity is at most $D(M)+2\lfloor\sqrt M\rfloor<N^3$. The degrees and precision remain polylogarithmic by Theorem \ref{thm:block}. For each shell, form the block polynomials first to determine their maximum degree and coefficient bound, then construct the corresponding tables. Sorting requests and table entries by length and key allows sequential contractions with only logarithmic overhead. Storing one shell's requests, tables, and sorting buffers at a time proves the same bound for space.
\end{proof}

The largest shells contribute $N^{1/22}M^{17/22}$ through their tables and block queries. Ignoring subpolynomial factors, the balance with the smoothing cost is
\[H\asymp N^{1/22}(N/H^2)^{17/22}=N^{9/11}H^{-17/11},\]
which selects $H=N^{9/28+o(1)}$ and $M=N^{5/14+o(1)}$. At this choice the other exponents of \eqref{eq:shell-total-cost} are $10/33$, $11/35$, $1/4$, and $5/28$, and the exponent of the first term, corresponding to the shared tables and the number of blocks that use them, is $9/28$.

\section*{References}
	
{
\setlength{\emergencystretch}{1em}
\printbibliography[heading=none] 
}

\end{document}